\newtheorem{teo}{Theorem}[section]
\newtheorem{lem}[teo]{Lemma}
\theoremstyle{remark}
\newtheorem{rem}[teo]{Remark}
\newtheorem{prop}[teo]{Proposition }
\newtheorem{notation}[teo]{Notation }
\newtheorem{cor}[teo]{Corollary }
\theoremstyle{definition}
\newtheorem{defi}[teo]{Definition }
\newtheorem{ex}[teo]{Example }
\numberwithin{equation}{section}
\DeclareMathOperator{\rk}{rk}
\DeclareMathOperator{\Ker}{Ker}
\newcommand{\C}{{\mathbb C}}
\newcommand{\Q}{{\mathbb Q}}
\newcommand{\N}{{\mathbb N}}
\newcommand{\K}{{\mathbb K}}
\newcommand{\calI}{\mathcal I} \newcommand{\calT}{\mathcal T}
\newcommand{\calM}{\mathcal M} \newcommand{\calN}{\mathcal N}
\newcommand{\calB}{\mathcal B} \newcommand{\calP}{\mathcal P}
\newcommand{\calG}{\mathcal G} \newcommand{\calR}{\mathcal R}
 \newcommand{\calV}{\mathcal V}
\newcommand{\calW}{\mathcal W}
 \newcommand{\pp}{\ensuremath {\mathbb{P}}}
\begin{document}

\title{Ideals of curves given by points}\thanks{This research was partially supported by 
M.I.U.R.  and by G.N.S.A.G.A.}

\author{E. Fortuna}
\address{Dipartimento di Matematica, Universit\`a
  di Pisa, Largo B. Pontecorvo 5, I-56127 Pisa,
  Italy}
\email{fortuna@dm.unipi.it}

\author{P. Gianni}
\address{Dipartimento di Matematica, Universit\`a
  di Pisa, Largo B. Pontecorvo 5, I-56127 Pisa,
  Italy}
\email{gianni@dm.unipi.it}

\author{B. Trager}
\address{IBM T.J.Watson Research Center, 1101 Kitchawan Road,
  Yorktown Heights, NY 10598, USA}
\email{bmt@us.ibm.com}

\subjclass[2010]{Primary 14H50, Secondary 13P10}

\keywords{Algebraic curves, border bases, interpolation.}

\begin{abstract}   Let $C$ be an irreducible projective curve of degree $d$ in 
$\pp^n(\K)$, where $\K$  is an algebraically closed
field, and let $I$ be the associated homogeneous prime ideal. We wish to compute generators for $I$, assuming we are given sufficiently many points on the curve $C$. In particular if $I$ can be generated by polynomials of degree at most $m$ and we are
given $m d + 1$ points on $C$, then we can find a set of generators
for $I$.  We will show that a minimal set of generators of $I$ can be constructed in polynomial time. Our constructions are completely independent of any notion of term 
ordering; this allows us the maximal freedom in performing our constructions in order to 
improve the numerical stability.  We also summarize some classical results on bounds for the degrees of the generators of our ideal in terms of the degree and genus of the curve.
\end{abstract}

\maketitle

\section{Introduction}
Let $C$ be an irreducible projective curve of degree $d$ in
$\pp^n(\K)$, where $\K$  is an algebraically closed
field, and let $I=\calI(C)$ be the associated homogeneous prime ideal
of $\mathcal P=\K[x_0, \ldots, x_n]$ consisting of all the polynomials
vanishing on $C$.  We wish to compute generators for $I$, assuming we
are given sufficiently many points on the curve $C$. In particular if
$I$ can be generated by polynomials of degree at most $m$ and we are
given at least $m d + 1$ points on $C$, then we can find a set of generators
for $I$. It is a simple consequence of Bezout's theorem that any
polynomial of degree $k$ which vanishes on more than $k d $ points of
$C$ must be contained in $I$.  Although the number of monomials in $n+1$ variables
of degree at most $m$ is not polynomial in both $n$ and $m$, we will present a process
which constructs generators degree by degree and results in a
polynomial time algorithm for computing generators for $I$. Polynomial time algorithms
for computing Gr\"obner bases of ideals of affine points were presented in \cite{Moller}, and
then extended to minimal generators of ideals of projective points in \cite{Marinari}.
These algorithms require exact arithmetic, and assume a term ordering is given.
We present new algorithms which are completely independent of any notion of term
ordering. We believe that this flexibility is necessary when working with approximate
coefficients.

Given a homogeneous ideal $I$, there are many different choices for monomials representing
cosets of $\calP/I$, i.e. for a complement of $I$. It is well known that the natural coset representatives associated
with Gr\"{o}bner bases do not remain stable with respect to small coefficient
perturbations of the ideal generators. Border bases were introduced to help
overcome this problem (\cite{Robbiano}). Given a fixed choice of complement for a zero-dimensional
ideal, its border basis is uniquely determined in contrast with Gr\"obner bases
where the complement is uniquely determined by the given term ordering.
Border bases are usually defined for zero-dimensional ideals, which guarantees a finite
basis. We extend the definition to homogeneous ideals of any dimension, but bound the degree
in order to preserve finiteness. In much of the literature
on border bases, the complements are required to be closed under division by variables.
This makes complements of border bases more similar to complements of Gr\"obner bases
which also have this property.
In particular this is done in \cite{Heldt}, therefore their
algorithms need to explicitly decide whether or not candidate leading
monomials have coefficients which are so small that they should be treated as zero.
As suggested by Mourrain and Tr\'ebuchet (\cite{Mourrain}), we only require the complement
to be connected to 1. This means we require that each complement monomial of degree $i$
is a multiple of some complement monomial of degree $i-1$. This extra flexibility
in the choice of complement monomials means that we can use standard numerical software like
the QR algorithm with column pivoting (QRP) (\cite{Golub}) to choose our complement in each degree. 
One could define a complement to be any set of representatives for $\calP/I$, but with this
definition we would not be able to obtain algorithms which are polynomial in both the
degree of the curve and the number of variables. In particular, requiring the complement
to be connected to 1 implies a strong condition on the syzygy module. We show 
that the syzygy module for a vector space basis of a homogeneous ideal whose
complement is connected to 1 is generated by vectors  whose entries
have degree at most one, generalizing the result of Mourrain and
Tr\'ebuchet for border bases of zero-dimensional ideals. These special
generators of the syzygy 
module can be used to obtain  a polynomial time algorithm for
constructing minimal generators for our ideals. 
The algorithms developed by Cioffi \cite{Cioffi} have a similar complexity in the case of
exact coefficients but her use of Gr\"{o}bner bases requires a
term ordering which determines a unique complement 
which may be numerically unstable when working with approximate points.

The motivation for this paper came from the desire to be able
to compute the generators of space curves starting from numerical
software which generates points on curves, in particular 
computing points on canonical or bicanonical models
for Riemann surfaces presented as Fuchsian groups (\cite{Seppala}).
Our intended applications differ from that of \cite{Heldt} and
\cite{Abbott} since we assume that the points defining our curve are
generated by a numerical algorithm whose errors tend to be very small
as opposed to empirical measurements whose errors could be much
larger. Thus our goal is to present algorithms based on numerically
stable constructions like SVD for computing ideal generators and QRP
for deciding which monomials represent the complement of our ideal.

In section 2 of this paper we derive some general properties of
border bases and complements for general homogeneous ideals. Using
these properties we present a polynomial time algorithm for finding
a minimal basis for a homogeneous ideal. In section 3 we specialize to
ideals of curves given by points, and use point evaluation matrices to complete the task
of computing border bases for homogeneous ideals of curves. 
Assuming exact arithmetic of unit cost, 
we also provide an overall complexity analysis of the border basis algorithm
and the minimal basis algorithm. In section 4 we consider the situation of approximate
points and show how our algorithms can be adapted to use standard numerical software,
where we allow numerical algorithms like the QRP to choose the complement monomials
in order to help improve the numerical stability. In general the stability
also strongly depends on how the points are distributed, and we feel this is
an interesting problem for future research, along with the possibility of using other
approaches such as interval arithmetic.

In order to use Bezout's theorem, we assume we have at least a bound
on the degree of our curve. In the last section we summarize some
classical results on bounds for the degrees of generators of our ideal
in terms of the degree and genus of the curve. In particular, as shown
by Petri (\cite{Petri}), a non-hyperelliptic canonical curve of genus $g \ge 4$ can
be generated in degrees 2 and 3. We also recall the completely general
result of Gruson-Lazarsfeld-Peskine (\cite{Peskine}) which shows that any
non-degenerate curve of degree $d$ in $\pp^n(\K)$ can be generated in
degree $d-n+2$.

Sometimes we have additional information about the nature of the curve
whose points we are given.  For instance, if we know the Hilbert
function, the rank of our point evaluation matrices is explicitly given
instead of being determined by examining its singular value
spectrum.

\section{Border bases for homogeneous ideals}\label{BB}

Let $\K$ be an algebraically closed field.  For any $s\in \N$, let
$\calT_s$ be the set of all terms of degree $s$ in $\mathcal P=\K[x_0,
\ldots, x_n]$ and let $\mathcal P_s$ be the vector subspace of
$\mathcal P$ generated by $\calT_s$. Recall that $\dim \mathcal
P_s=\binom {n+s}s$.  We will also set $\mathcal P_{\leq s}=
\oplus_{i=0}^s \mathcal P_i$.

\begin{notation} We will use the following notation:
\begin{enumerate}
\item For any subset $Y\subset \pp^n(\K)$ denote by $\calI(Y)$ the
radical homogeneous ideal of $\calP$ consisting of all the polynomials
vanishing on $Y$.
\item For any homogeneous ideal $I$ in $\mathcal P$, let $I_s=I \cap
  \mathcal P_s$ (so that $I=\oplus_{s\geq 0} I_s$) and let $I_{\leq
  s}=I \cap \mathcal P_{\leq s}$.
\item For any $S\subset\mathcal P$, denote by $I(S)$ the ideal
  generated by $S$.
\item For any  $S\subset\mathcal P_s$, denote by $ \langle S \rangle $ the vector
  subspace of $\mathcal P_s$ generated by $S$.
\item For any  $S\subset\mathcal P_s$, let $S^+=\bigcup_{j=0}^n
  (x_j\,S)\subset \mathcal P_{s+1}$.
\item If ${\bf a}=(a_1, \ldots, a_h)\in \K^h$ and $\mathcal F =[F_1,
  \ldots, F_h]$ is a list of polynomials, we set $${\bf a} \cdot
  \mathcal F= a_1F_1 + \ldots+ a_h F_h.$$
\item For any finite set $A$, we denote by $|A|$ its cardinality.
\end{enumerate}
\end{notation}

\begin{defi}\label{complement} Let $J$ be a proper homogeneous ideal in
  $\mathcal P$ and $s\in \N$.  Let $\calN_0 = \{1\}$ and, for each
  $k=1, \ldots, s$, assume that $\calN_k$ is a set of monomials in
  $\calT_k$ such that
$$\calN_k \subset \calN_{k-1}^+ \quad \mbox{and} \quad \calP_k = J_k
\;\oplus  \langle \calN_k \rangle .$$ We call $\calN=\{\calN_0, \ldots, \calN_s\}$ 
a \emph{complement} of the ideal $J$ up to degree $s$.
\end{defi}

\begin{rem}\label{compl-prop} Let $\calN=\{\calN_0, \ldots,
    \calN_s\}$ be a complement of a proper homogeneous ideal $J$ up to
degree $s$. Then: 
\begin{enumerate}
\item the condition $\calN_k \subset \calN_{k-1}^+$ implies that 
$\calN$ is connected to $1$, i.e. for each
  $m\in \calN$ there exist variables $x_{i_1}, \ldots, x_{i_k}$ such
  that $m=x_{i_1}\cdot \ldots\cdot x_{i_k}$ and $x_{i_1}\cdot
  \ldots\cdot x_{i_j}\in \calN$ for each $j < k$,
\item from the
definition it follows that, for $k=1, \ldots, s$,
$$ \langle \calN_{k-1}^+ \rangle =(J_k \;\cap  \langle \calN_{k-1}^+ \rangle ) \;\oplus  \langle \calN_k \rangle .$$
\end{enumerate}
\end{rem}\qed

\begin{lem}\label{iteration-compl} Let $J$ be a proper homogeneous ideal of
  $\calP$. Assume that $\calN_{k-1} \subseteq \calT_{k-1}$ and $\calN_k
  \subseteq \calT_k$ are sets of monomials such that
\begin{enumerate}
\item[(a)] $\calP_{k-1}= J_{k-1} \,\oplus  \langle \calN_{k-1} \rangle $
\item[(b)] $ \langle \calN_{k-1}^+ \rangle =(J_k \,\cap  \langle \calN_{k-1}^+ \rangle ) \,\oplus  \langle \calN_k \rangle $.
\end{enumerate}
Then
\begin{enumerate}
\item $J_k=  \langle J_{k-1}^+ \rangle  + \,(J_k \, \cap  \langle \calN_{k-1}^+ \rangle )$
\item $\calP_k = J_k\,\oplus  \langle \calN_k \rangle $.
\end{enumerate}
\end{lem}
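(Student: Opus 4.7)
The plan is to use the fact that every monomial of degree $k$ is a variable times a monomial of degree $k-1$, so that hypothesis (a) lifts to a decomposition of $\calP_k$ which, combined with (b), yields both claims.

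First I would establish the key preliminary observation that $\calP_k = \langle J_{k-1}^+\rangle + \langle \calN_{k-1}^+\rangle$. This follows because $\calT_k = \calT_{k-1}^+$ (every degree-$k$ monomial is $x_j$ times some degree-$(k-1)$ monomial), so that $\calP_k = \sum_{j=0}^n x_j \calP_{k-1}$; applying (a) gives
\[
\calP_k = \sum_{j=0}^n x_j J_{k-1} + \sum_{j=0}^n x_j \langle \calN_{k-1}\rangle = \langle J_{k-1}^+\rangle + \langle \calN_{k-1}^+\rangle,
\]
and of course $\langle J_{k-1}^+\rangle \subseteq J_k$.

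For part (1), the inclusion $\supseteq$ is immediate. For $\subseteq$, take $f \in J_k \subseteq \calP_k$ and use the preliminary decomposition to write $f = g + h$ with $g \in \langle J_{k-1}^+\rangle$ and $h \in \langle \calN_{k-1}^+\rangle$. Since $g \in J_k$, the element $h = f - g$ also lies in $J_k$, hence in $J_k \cap \langle \calN_{k-1}^+\rangle$, giving the desired expression.

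For part (2), the sum $\calP_k = J_k + \langle \calN_k\rangle$ follows by chaining the decompositions: starting from $\calP_k = \langle J_{k-1}^+\rangle + \langle \calN_{k-1}^+\rangle$, apply (b) to rewrite $\langle \calN_{k-1}^+\rangle$, then use (1) to absorb $\langle J_{k-1}^+\rangle + (J_k \cap \langle \calN_{k-1}^+\rangle)$ into $J_k$. Directness reduces to showing $J_k \cap \langle \calN_k\rangle = \{0\}$: since $\langle \calN_k\rangle \subseteq \langle \calN_{k-1}^+\rangle$ (this is implicit in the sum in (b) being well-defined), any element of $J_k \cap \langle \calN_k\rangle$ lies in $(J_k \cap \langle \calN_{k-1}^+\rangle) \cap \langle \calN_k\rangle$, which is $\{0\}$ by the directness in (b). There is no real obstacle here — the proof is essentially a two-step bookkeeping argument — and the only subtlety is to invoke the monomial identity $\calT_k = \calT_{k-1}^+$ at the outset to bridge the single-degree hypotheses (a) and (b).
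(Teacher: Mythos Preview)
Your proof is correct and follows essentially the same route as the paper: establish $\calP_k = \langle J_{k-1}^+\rangle + \langle \calN_{k-1}^+\rangle$ from (a), use $\langle J_{k-1}^+\rangle \subseteq J_k$ to get (1) via the modular law (which you spell out element-wise), then chain with (b) to obtain the sum in (2), and derive directness from $\langle \calN_k\rangle \subseteq \langle \calN_{k-1}^+\rangle$. The only difference is expository---you are slightly more explicit about $\calT_k = \calT_{k-1}^+$ and the two inclusions in (1)---but the argument is the same.
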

\begin{proof} By  hypothesis (a), we have
$$\calP_k =  \langle \calP_{k-1} ^+ \rangle  =   \langle J_{k-1}^+ \rangle  +  \langle \calN_{k-1}^+ \rangle $$
and, since $J_k \supseteq  \langle J_{k-1}^+ \rangle $, we have also
$$
J_k=\calP_k \cap J_k=  \langle J_{k-1}^+ \rangle  + (J_k \, \cap  \langle \calN_{k-1}^+ \rangle ),
$$
which proves (1).
Hence, by the previous relations and hypothesis (b), we have 
$$\calP_k =  \langle J_{k-1}^+\rangle + \langle \calN_{k-1}^+\rangle = 
\langle J_{k-1}^+\rangle + (J_k \,\cap \langle \calN_{k-1}^+ \rangle ) + \langle \calN_k \rangle =
J_k +  \langle \calN_k \rangle .$$
On the other hand, again by hypothesis (b) we have that $ \langle \calN_k \rangle 
\subseteq  \langle \calN_{k-1}^+ \rangle $ and hence 
$$J_k\, \cap  \langle \calN_k \rangle  \subseteq J_k \, \cap  \langle \calN_{k-1}^+ \rangle  \cap
 \langle \calN_k \rangle  =\{0\},$$
which completes the proof of (2).
\end{proof}

\begin{rem}\label{complsuffices} It 
is always possible to choose a complement up to any fixed degree for
any proper homogeneous ideal $J$ of $\calP$ incrementally. Namely, if
$\calN =\{\calN_0, \ldots, \calN_{k-1}\}$ is a complement of $J$ up to degree
$k-1$, it is sufficient to choose a set $\calN_k \subseteq \calT_k$
such that $ \langle \calN_{k-1}^+ \rangle =(J_k \,\cap  \langle \calN_{k-1}^+ \rangle ) \,\oplus
 \langle \calN_k \rangle $: then Lemma \ref{iteration-compl} assures that $\calP_k =
J_k\,\oplus  \langle \calN_k \rangle $ and hence that $\calN=\{\calN_0, \ldots,
\calN_k\}$ is a complement of the ideal $J$ up to degree $k$.

Moreover, again by Lemma \ref{iteration-compl}, once one has a
complement $\calN$ of $J$ up to any fixed degree $s$, one can get a
set of generators of the ideal $I(J_1, \ldots, J_s)$ as the union of
sets of generators of $J_k \,\cap  \langle \calN_{k-1}^+ \rangle $ for $k=1, \ldots,
s$.  


\end{rem}

\begin{defi}\label{border-basis} Let $J$ be a proper homogeneous ideal in
  $\mathcal P$ and assume that $\calN=\{\calN_0, \ldots, \calN_s\}$ is
a complement of  $J$ up to degree $s$.
\begin{enumerate}
\item For all $k=1, \ldots, s$ let $(\partial \calN)_k= \calN_{k-1}^+
\setminus \calN_k$; the elements in $(\partial \calN)_k$ will be
called \emph{border monomials} in degree $k$.
\item For each $m\in (\partial \calN)_k$ let $\psi(m)$ be the unique
  polynomial in $ \langle \calN_k \rangle $ such that $m + \psi(m)\in J_k$; we will
  call the homogeneous polynomial $m + \psi(m)$ the \emph{border polynomial
  associated to $m$}.
\item If $\calB_k$ denotes the set of all border polynomials of degree
  $k$, the set $\calB=\calB_1 \cup \ldots \cup \calB_s$ is called the
  \emph{border basis of $J$ up to degree $s$ associated to $\calN$}.
\end{enumerate}
\end{defi}

\begin{rem}\label{BBasis}  This notion of bounded degree
  border basis applies to arbitrary homogeneous ideals and coincides
  with the classical notion of border basis (\cite{Robbiano}) in the case of
  homogeneous zero-dimensional ideals, provided we choose $s$ to be
  larger than the maximal degree of any monomial in the (finite)
  complement $\calN$ of $J$.
\qed\end{rem}

\begin{prop}\label{Bkgenerates} Assume that $\calN$ is
a complement of a proper homogeneous ideal  $J$ up to degree $s$ and let 
$\calB=\calB_1 \cup \ldots \cup \calB_s$ be the associated border
basis. Then 
\begin{enumerate}
\item $\calB_k$ is a basis of the vector space
$J_k \,\cap
 \langle \calN_{k-1}^+ \rangle $ for each  $k=1, \ldots, s$,
\item $\calB$ is a set of generators of the ideal $I(J_1, \ldots,
  J_s)$.
\end{enumerate}
\end{prop}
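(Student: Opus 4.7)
The plan is to handle (1) first, since (2) will follow by iterating Lemma \ref{iteration-compl}(1) together with (1).

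For part (1), I would first observe the containment $\calB_k \subseteq J_k \cap \langle \calN_{k-1}^+ \rangle$: by the very definition of a border polynomial, $m + \psi(m) \in J_k$, and since $m \in (\partial\calN)_k \subseteq \calN_{k-1}^+$ and $\psi(m)\in \langle \calN_k \rangle \subseteq \langle \calN_{k-1}^+ \rangle$ (using Remark \ref{compl-prop}(2)), we have $m + \psi(m) \in \langle \calN_{k-1}^+ \rangle$.

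The heart of the argument is linear independence of $\calB_k$. The key observation is that $\calN_{k-1}^+$ is the disjoint union of the two sets of distinct monomials $(\partial\calN)_k$ and $\calN_k$, so these monomials together form a linearly independent subset of $\calP_k$. Given a relation $\sum_{m \in (\partial\calN)_k} a_m (m + \psi(m)) = 0$, I would split it as $\sum a_m m + \sum a_m \psi(m)$; the first sum lies in the span of $(\partial\calN)_k$, the second in the span of $\calN_k$, so comparing coefficients on the distinct monomials of $\calN_{k-1}^+$ forces each $a_m = 0$. To finish (1), I would count dimensions: by Remark \ref{compl-prop}(2),
\[
\dim\bigl(J_k \cap \langle \calN_{k-1}^+ \rangle\bigr) = |\calN_{k-1}^+| - |\calN_k| = |(\partial\calN)_k| = |\calB_k|,
\]
so the independent set $\calB_k$ has the right size to be a basis.

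For part (2), I would argue by induction on $k$ that $\calB_1 \cup \ldots \cup \calB_k$ generates the ideal $I(J_1, \ldots, J_k)$. The base case $k=1$ is immediate from (1), since $\calN_0^+ = \{x_0, \ldots, x_n\}$ spans $\calP_1$, so $J_1 \cap \langle \calN_0^+\rangle = J_1$ and $\calB_1$ is a basis of $J_1$. For the inductive step, Lemma \ref{iteration-compl}(1) gives
\[
J_k = \langle J_{k-1}^+ \rangle + \bigl(J_k \cap \langle \calN_{k-1}^+ \rangle\bigr),
\]
so every element of $J_k$ lies in the ideal generated by $J_{k-1}$ (which, by induction, is generated by $\calB_1 \cup \ldots \cup \calB_{k-1}$) together with $\calB_k$, which spans the second summand by (1). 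Hence $I(J_1, \ldots, J_k) = I(J_1, \ldots, J_{k-1}, \calB_k) = I(\calB_1 \cup \ldots \cup \calB_k)$, closing the induction and yielding (2) when $k=s$.

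The only subtle point is the linear independence argument in (1); once one recognizes that $\calN_{k-1}^+$ is literally the disjoint union $(\partial\calN)_k \sqcup \calN_k$ of monomials, the rest is bookkeeping. The generation statement in (2) is essentially a repackaging of Lemma \ref{iteration-compl}(1), already anticipated in Remark \ref{complsuffices}.
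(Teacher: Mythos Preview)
Your proof is correct and rests on the same structural facts as the paper's. The only difference is organizational: for (1) the paper argues spanning directly---given $b\in J_k\cap\langle\calN_{k-1}^+\rangle$, it writes $b=u+v$ with $u\in\langle\calB_k\rangle$ and $v\in\langle\calN_k\rangle$, then uses $v=b-u\in J_k\cap\langle\calN_k\rangle=\{0\}$---whereas you prove linear independence first and then invoke the dimension count from Remark~\ref{compl-prop}(2); for (2) the paper simply cites Remark~\ref{complsuffices}, which is precisely the induction you wrote out.
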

\begin{proof} (1) Let $b\in J_k \,\cap  \langle \calN_{k-1}^+ \rangle $. Since
  $\calN_{k-1}^+ = (\partial \calN)_k \cup \calN_k$, there exist
 $a_i, b_i \in \K$ such that 
$$b=\sum_{m_i\in (\partial \calN)_k} a_i m_i  + 
\sum_{m_i\in \calN_k} b_i m_i.$$
For each $m_i\in (\partial \calN)_k$ let $\psi(m_i)$ be the unique
  polynomial in $ \langle \calN_k \rangle $ such that $m_i + \psi(m_i)\in J_k$. Then
  we can write $b=u+v$ with
$$ u=\sum_{m_i\in (\partial \calN)_k} a_i (m_i + \psi(m_i)), \qquad
v=\sum_{m_i\in \calN_k}b_im_i - \sum_{m_i\in (\partial
\calN)_k}a_i\psi(m_i).$$ Note that $u\in J_k \,\cap  \langle \calB_k \rangle $ and
$v\in  \langle \calN_k \rangle $. Since $b\in J_k$, then we have that $v=b-u\in J_k
\,\cap  \langle \calN_k \rangle  =\{0\}$. Thus $b=u$ and hence $b\in  \langle \calB_k \rangle $.

(2) follows immediately from (1) and Remark \ref{complsuffices}. 
\end{proof}

We now suggest a simple method to construct 
recursively both a complement and a border basis of a homogeneous
ideal $J$ up to any fixed degree. Even if the ideal may not have an explicit representation,  
if we assume the  capability of computing a basis of its intersection with a vector subspace
generated by a finite set $N$ of monomials of the same degree, we will be able to compute a border basis for $J$ up to any fixed degree.
 We will denote this condition by saying that the ideal is {\it represented } by  the function  
$ComputeBasis_J$, which, for any such $N$, returns a basis for the intersection $J \cap \langle N \rangle$.  In the next section we will see that such a function can be easily computed for ideals of points.

In the description of the algorithms we will use the following notations:
\begin{enumerate}
\item[a.] If $v $ is a polynomial and $S=\{n_1,\ldots,n_t\}$ is a set of monomials, 
then coeffs$(v,S)$ will denote the vector $(a_1,\ldots,a_t)$ of the coefficients of the
monomials of $S$ in $v$.

\item[b.]  Given a matrix $A$, we will denote $RRE(A) =(E,\Sigma)$ where 
\begin{enumerate}
\item[-] $E$ is the completely reduced row echelon form of $A$ (i.e. each pivot is equal to 1, and in each of the columns containing a pivot all the elements different from the pivot are zero) 
\item[-] $\Sigma$ is the
set consisting of the indexes of the columns containing the pivots of $E$ .
\end{enumerate}
\end{enumerate}

\underline{Algorithm BorderBasisWithComplement}

\underline {Input:} 
\begin{enumerate} 
\item[-] a function  $ComputeBasis_J$ representing a  homogeneous ideal $J$ 
\item[-] $s \in \N$
\end{enumerate}

\underline{Output:}
\begin{enumerate}
\item [-] $\{\calN_0, \ldots, \calN_{s}\}$ a complement of $J$ up to degree $s$
\item[-] $\{\calB_1, \ldots , \calB_s\}$ the associated border basis.
\end{enumerate}

\underline{Procedure:}
\begin{enumerate}
\item[-] $\calN_0 = \{1\}$ 
\item[-] for $k=1..s$ repeat
\begin{enumerate}
\item[]--- construct the set of distinct monomials $\calN_{k-1}^+ = \{ m_1 ,\ldots, m_t\}$
\item[] $\calN_{k-1}^+ := \{x_i  m \ |\ 1\leq i \leq n , m \in \calN_{k-1}\}$
\item[] $\calV _k:= ComputeBasis_J(\calN_{k-1}^+)$
\item[] $q:=| \calV_k |$
\item[] $t:= | \calN_{k-1}^+ |$
\item[] --- compute the $q \times t$  matrix with rows coeffs$(v,\calN_{k-1}^+)$ for $v \in \calV_k$
\item[] $A:= matrix ($coeffs$(v, \calN_{k-1}^+) \ |\ v \in \calV _k)$ 
\item[] $(E,\Sigma) := RRE(A)$
\item[] --- the monomial with index not in $\Sigma$ are put in $\calN_k$ 
\item[] $\calN_k:= \{m_j \ | \ j \notin \Sigma\}$
\item[] --- every row represents an element of $\calB_k$
\item[] $\calB_k:= \{\sum_j e_{i,j} m_j\ |  i=1, \ldots, q\}$
\end{enumerate}
\end{enumerate}

\begin{prop}\label{complinP} Given a proper homogeneous ideal $J$ in
  $\mathcal P$ represented by a function  {\it $ComputeBasis_J$}  and $s \in \N$,
 the algorithm BorderBasisWithComplement constructs a complement and a border
  basis for $J$ up to degree $s$.
\end{prop}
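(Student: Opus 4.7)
The plan is to proceed by induction on $k$ from $1$ to $s$, showing that at each iteration the algorithm produces a set $\calN_k$ and polynomials $\calB_k$ satisfying the hypotheses of Definition \ref{complement} and Definition \ref{border-basis}. The base case is immediate from $\calN_0=\{1\}$ together with the assumption that $J$ is proper, so $\calP_0 = J_0 \oplus \langle \calN_0 \rangle$.

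For the inductive step I would invoke Remark \ref{complsuffices}: assuming $\{\calN_0,\ldots,\calN_{k-1}\}$ is already a complement of $J$ up to degree $k-1$, it suffices to show that the $\calN_k$ constructed by the algorithm satisfies
$$\langle \calN_{k-1}^+ \rangle = (J_k \cap \langle \calN_{k-1}^+ \rangle) \oplus \langle \calN_k \rangle,$$
since Lemma \ref{iteration-compl} then yields $\calP_k = J_k \oplus \langle \calN_k \rangle$. The key translation is that $\calV_k := ComputeBasis_J(\calN_{k-1}^+)$ is by hypothesis a basis of $J_k \cap \langle \calN_{k-1}^+\rangle$, and the $q\times t$ matrix $A$ (whose rows are the coordinate vectors of these basis elements in the ordered set of distinct monomials $\calN_{k-1}^+=\{m_1,\ldots,m_t\}$) has row space isomorphic to $J_k \cap \langle \calN_{k-1}^+ \rangle$.

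The heart of the argument is then a standard property of the reduced row echelon form $(E,\Sigma)$ of $A$. Row reduction preserves the row space, so the rows of $E$ still form a basis of $J_k\cap\langle\calN_{k-1}^+\rangle$, and in particular $|\Sigma|=q$. Since $\calN_k=\{m_j : j\notin\Sigma\}$ has cardinality $t-q$, a dimension count gives
$$\dim(J_k\cap\langle\calN_{k-1}^+\rangle)+\dim\langle\calN_k\rangle = q+(t-q)=t=\dim\langle\calN_{k-1}^+\rangle,$$
so it only remains to verify that the sum is direct. For this, suppose $w\in J_k\cap\langle\calN_{k-1}^+\rangle$ lies in $\langle\calN_k\rangle$; writing $w=\sum_i c_i(\text{row}_i \text{ of }E)$, the coefficient of the pivot monomial $m_j$ (with $j\in\Sigma$ the pivot of row $i$) is exactly $c_i$ by the defining property of RRE, and this must vanish since $w\in\langle\calN_k\rangle$, forcing $w=0$. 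Hence the direct sum holds and the inductive step is established. I also need to record that $\calN_k\subset\calN_{k-1}^+$, which is immediate from the construction.

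Finally, I would identify $\calB_k$ with the associated border polynomials. Each row of $E$, by the RRE property, has the form $m_j + \sum_{\ell\notin\Sigma} e_{i,\ell}m_\ell$ with $j\in\Sigma$; since $j\notin\Sigma$ means $m_j\in\calN_k$, we have $j\in\Sigma$ iff $m_j\in(\partial\calN)_k=\calN_{k-1}^+\setminus\calN_k$, so each row is an element of $J_k$ of the form $m + \psi(m)$ with $m\in(\partial\calN)_k$ and $\psi(m)\in\langle\calN_k\rangle$. By the uniqueness of $\psi(m)$ (which follows from $\calP_k=J_k\oplus\langle\calN_k\rangle$), these rows are precisely the border polynomials of degree $k$, so $\calB_k$ coincides with the set from Definition \ref{border-basis}. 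I expect the only delicate point to be the bookkeeping in the direct-sum argument above; everything else is essentially a translation between linear algebra on $A$ and the required decomposition of $\langle\calN_{k-1}^+\rangle$.
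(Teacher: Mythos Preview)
Your proof is correct and follows essentially the same approach as the paper's: both arguments verify at each step that the rows of the reduced row echelon matrix $E$ give a basis of $J_k\cap\langle\calN_{k-1}^+\rangle$ in border form, that the non-pivot monomials yield the direct-sum decomposition $\langle\calN_{k-1}^+\rangle=(J_k\cap\langle\calN_{k-1}^+\rangle)\oplus\langle\calN_k\rangle$, and then invoke Lemma~\ref{iteration-compl}. The paper's proof is simply terser; your dimension count and explicit verification that the intersection is zero make the same steps explicit.
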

\begin{proof} At each step,  the rows of $E$  correspond to polynomials of 
the form $ m+\psi (m)$, with $m \not\in \calN_k$ and $\psi (m) \in \calN_k$, which are  a basis of
$J_{k} \,\cap  \langle \calN_{k-1}^+ \rangle $. Moreover , since the monomials in  
$\calN_k$ correspond to the non-pivot positions, by construction we have that $  \langle \calN_{k-1}^+ \rangle =(J_{k} \,\cap  \langle \calN_{k-1}^+ \rangle ) \, \oplus  \langle \calN_k \rangle $.

 Then by Lemma \ref{iteration-compl} we get that 
$\calP_k = J_k \, \oplus  \langle \calN_k \rangle $; hence $\{\calN_0, \ldots,
\calN_k\}$ is a complement of $J$ up to degree $k$ and $\calB_1 \cup \ldots \cup \calB_k$ is the associated border basis up to degree $k$.
 \end{proof}

By Proposition \ref{Bkgenerates} a
border basis $\calB$ of $J$ up to degree $s$ is a set of generators of the
ideal $I(J_1, \ldots, J_s)$, but in general it is not minimal. We will
see how one can eliminate redundant polynomials in $\calB$ so as to
obtain a minimal set of generators of that ideal.

\begin{prop}\label{linear-gen} Let $J$ be a proper homogeneous ideal in
  $\mathcal P$. Assume that $\calN=\{\calN_0, \ldots, \calN_s\}$ is
a complement of  $J$ up to degree $s$ and let 
$\calB=\calB_1 \cup \ldots \cup \calB_s$ be the associated border
basis. Then:
\begin{enumerate}
\item The ideal $L=I(\calB, \calN_s^+)$ is homogeneous and
  zero-dimensional, and $\calB \, \cup \calN_s^+$ is the border basis of
  $L$  associated to its complement $\calN$.
\item The module $Syz (\calB, \calN_s^+)$ is generated by vectors
  whose entries have degree at most $1$.
\end{enumerate} 
\end{prop}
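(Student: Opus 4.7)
For part (1), homogeneity of $L$ is immediate since each element of $\calB$ and each element of $\calN_s^+$ is homogeneous. To prove $L$ is zero-dimensional, I would use Lemma \ref{iteration-compl} inductively to conclude that $I(\calB)_k = J_k$ for every $k \le s$ (since $J_k = \langle J_{k-1}^+\rangle + \langle \calB_k\rangle$), then combine $\calP_{s+1} = \langle \calP_s^+\rangle = \langle J_s^+\rangle + \langle \calN_s^+\rangle$ with $J_s \subseteq I(\calB)$ and $\calN_s^+ \subseteq L$ to obtain $L_{s+1} = \calP_{s+1}$, which propagates to $L_k = \calP_k$ for all $k > s$. To identify $\calB \cup \calN_s^+$ as the border basis of $L$ associated to $\calN$, I would extend $\calN$ by $\calN_k = \emptyset$ for $k > s$ and check the complement property degree by degree: for $k \le s$ the equality $L_k = J_k$ transfers $\calP_k = J_k \oplus \langle \calN_k\rangle$ to $L$, and at degree $s+1$ the border monomials are exactly those in $\calN_s^+$ with $\psi(m) = 0$ (as $\calN_{s+1} = \emptyset$), so the border polynomials are the monomials themselves.

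For part (2), write $\calB \cup \calN_s^+ = \{F_1, \ldots, F_h\}$ with $d_i = \deg F_i$. The distinguished degree-$1$ syzygies arise from commutation: for each $F_i$ and each pair of variables $(x_p, x_q)$, the identity $x_p x_q F_i = x_q x_p F_i$ becomes, after rewriting each side by the border basis of $L$ to reduce modulo $\langle \calN_\bullet\rangle$, a polynomial relation $\sum a_l F_l = 0$ in which every $a_l$ has degree at most $1$. This yields a family $\{\tau\}$ of degree-$1$ syzygies.

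My approach to show the $\tau$ generate $Syz(\calB, \calN_s^+)$ is a reduction argument: given a homogeneous syzygy $\sigma = (a_1, \ldots, a_h)$ of degree $k$, decompose each $a_i = \sum_p x_p b_{ip}$, rewrite $\sigma = \sum_{i,p} b_{ip}(x_p e_i)$ (with $e_i$ the standard generators of $\calP^h$), and use the $\tau$'s to replace each $x_p e_i$ by simpler terms. The connectedness of $\calN$ to $1$ supplies, for each monomial in the support of $b_{ip}$, a finite chain $1 = n_0, n_1, \ldots \in \calN$, giving a well-founded order along which the peeling of variables progresses. After finite reduction, the residual syzygy is forced to vanish by the direct-sum decomposition $\calP_k = L_k \oplus \langle \calN_k\rangle$ and the uniqueness of border-basis expressions from Proposition \ref{Bkgenerates}.

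The main obstacle is the bookkeeping and termination of this reduction, since each commutation substitution introduces new terms that must themselves be rewritten. The technical heart of the argument is to order the reductions simultaneously by the degree filtration of the $F_i$ and by position on the $\calN$-connectivity chains, a procedure that relies essentially on the connectedness-to-$1$ hypothesis and generalizes Mourrain--Tr\'ebuchet's zero-dimensional result to the bounded-degree setting of arbitrary homogeneous ideals.
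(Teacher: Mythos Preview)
Your treatment of part (1) matches the paper's proof closely: both use Lemma \ref{iteration-compl} to pass from $\calP_s = L_s \oplus \langle \calN_s\rangle$ (via $L_s = J_s$) and $\calN_s^+ \subseteq L$ to $\calP_{s+1} = L_{s+1}$, then extend $\calN$ by $\calN_j = \emptyset$ for $j > s$ and observe that $(\partial\calN)_{s+1} = \calN_s^+$ with $\psi \equiv 0$.

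For part (2) you take a genuinely different route. The paper's proof is one sentence: since part (1) shows that $L$ is a \emph{zero-dimensional} ideal and $\calB \cup \calN_s^+$ is its border basis with complement $\calN$ connected to $1$, Theorem 4.3 of Mourrain--Tr\'ebuchet applies directly and gives the conclusion. In other words, the whole purpose of part (1) in the paper is to reduce part (2) to the situation where the Mourrain--Tr\'ebuchet result is already available; no generalization to ``the bounded-degree setting of arbitrary homogeneous ideals'' is needed, because after adjoining $\calN_s^+$ the ideal $L$ \emph{is} zero-dimensional in the classical sense.

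Your direct argument via commutation syzygies and a reduction ordered by the connectivity chains is, in outline, precisely how one proves the Mourrain--Tr\'ebuchet theorem itself, so it is a legitimate strategy and would yield a self-contained proof. The cost is that you are re-deriving a nontrivial cited result, and the ``bookkeeping and termination'' you flag as the main obstacle is exactly the substance of that theorem. The paper's approach buys economy by recognizing that part (1) already places you inside the hypotheses of the existing literature; your approach buys independence from that citation at the price of reproducing its proof.
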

\begin{proof} (1)  Since $L_s=J_s$, we have that 
$\calP_s = L_s \, \oplus  \langle \calN_s \rangle $. Moreover, since $ \calN_s^+
  \subseteq L$, choosing $\calN_{s+1}=\emptyset$ we see that the
  triple $L, \calN_s, \calN_{s+1}$ satisfies the hypotheses of Lemma
  \ref{iteration-compl} (we set $ \langle \emptyset  \rangle = \{0\}$). Thus we get
  that $\calP_{s+1}= L_{s+1}$; so the ideal $L$
  is zero-dimensional. If we set $\calN_j =\emptyset$ for all $j\geq
  s+1$, then $\widetilde \calN= \{\calN_j\}_{j\in\N}$ is a complement
  of $L$. Moreover, since $(\partial
  \calN)_{s+1}=\calN_s^+$, if we let $\calB_{s+1}=\calN_s^+$, then
  $\calB \cup \calN_s^+$ is a border basis of $L$.

  (2) Since $\calN_{k+1}\subseteq \calN_k^+$ for each $k$ and $\calN_0
  = \{1\}$, we have that $\calN$ is connected to $1$
  (see Remark \ref{compl-prop}).  Thus it is possible to
  apply Theorem 4.3 in \cite{Mourrain}, which implies that the syzygies
  among the elements of a border basis of a zero-dimensional ideal
  with a complement connected to $1$ can be generated by syzygies
  whose coefficients have degree at most $1$. 
\end{proof}

By changing bases in each constant degree subspace we obtain the following more general result:

\begin{prop}\label{linear-gen-2} Let $J$ be a proper homogeneous ideal in
  $\mathcal P$. Assume that $\calN=\{\calN_0, \ldots, \calN_s\}$ is
a complement of  $J$ up to degree $s$. Let $\calV=\calV_1 \cup \ldots \cup \calV_s$ where $\calV_k$ is a basis of
$J_k \,\cap  \langle \calN ^+_{k-1} \rangle $ for each $k=1, \ldots, s$. Then 
the module $Syz (\calV, \calN_s^+)$ is generated by vectors
  whose entries have degree at most $1$.
\end{prop}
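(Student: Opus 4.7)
The plan is to reduce Proposition \ref{linear-gen-2} directly to Proposition \ref{linear-gen} via a constant change of basis in each degree. Since $\calV_k$ and $\calB_k$ are two bases of the same finite-dimensional $\K$-vector space $J_k \cap \langle \calN_{k-1}^+ \rangle$, they are related by an invertible matrix $M_k \in GL(\K)$ with entries in the field, i.e. of degree $0$. The elements of $\calN_s^+$ appear identically in both lists, so we attach an identity block there.

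First I would set up the change of basis globally: let $M$ be the block-diagonal matrix $M=\mathrm{diag}(M_1,\ldots,M_s,I)$, viewed as an invertible constant matrix relating the ordered lists $\calV\cup\calN_s^+$ and $\calB\cup\calN_s^+$. Next I would observe that right multiplication by $M$ yields a $\calP$-module isomorphism
\[
\Phi\colon \mathrm{Syz}(\calV,\calN_s^+)\longrightarrow\mathrm{Syz}(\calB,\calN_s^+),\qquad \mathbf{f}\longmapsto \mathbf{f}M,
\]
with inverse given by right multiplication by $M^{-1}$. The routine verification is that $\mathbf{f}\cdot(\calV\cup\calN_s^+)=\mathbf{f}M\cdot(\calB\cup\calN_s^+)$, so $\mathbf{f}$ is a syzygy on one side iff $\mathbf{f}M$ is a syzygy on the other.

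The crucial observation, which makes everything work, is that $M$ and $M^{-1}$ have entries in $\K$. Therefore both $\Phi$ and $\Phi^{-1}$ preserve the maximum entry-degree of syzygy vectors: $\deg(\mathbf{f}M)\le\deg(\mathbf{f})$ and vice versa. Hence a generating set of $\mathrm{Syz}(\calB,\calN_s^+)$ consisting of vectors of degree at most $1$ (which exists by Proposition \ref{linear-gen}(2)) pulls back under $\Phi^{-1}$ to a generating set of $\mathrm{Syz}(\calV,\calN_s^+)$ whose entries also have degree at most $1$.

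There is no real obstacle here; the only thing to be careful about is bookkeeping between the two orderings of the lists and checking that the $\calN_s^+$ part is treated consistently (it is, since it is untouched by the change of basis). Once $M$ is written down, the degree bound transfers automatically because the change of basis is of degree zero.
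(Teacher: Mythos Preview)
Your proposal is correct and is exactly the argument the paper intends: the paper's entire proof is the one-line remark ``By changing bases in each constant degree subspace we obtain the following more general result,'' and you have simply spelled out that change of basis via the block-diagonal constant matrix $M$ and the resulting degree-preserving isomorphism of syzygy modules. There is nothing to add.
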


The following corollary shows that the redundant elements in $\calV_s$
can be expressed as a combination of elements in $\calV_{s-1}$ and the
other elements in  $\calV_s$:

\begin{cor}\label{gen-syz} Under the hypotheses of Proposition
    \ref{linear-gen-2}, let $f\in \calV_s$ and denote 
    $\calW_s=\calV_s\setminus \{f\}$. If   
$f\in I(\calV_1, \ldots , \calV_{s-1}, \calW_s)$, then 
$f\in  \langle \calV_{s-1} ^+, \calW_s \rangle $.
\end{cor}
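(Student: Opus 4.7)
The plan is to convert the hypothesis into a homogeneous syzygy among $\calV \cup \calN_s^+$ and then apply Proposition~\ref{linear-gen-2} to rewrite it as a combination of syzygies whose entries are linear. A careful degree count in the $f$-entry of the resulting decomposition will isolate a sub-syzygy that expresses $f$ as a scalar combination of elements of $\calV_{s-1}^+$ and $\calW_s$.

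Concretely, taking the degree-$s$ component of the assumption gives a relation
\[
f = \sum_{k<s}\sum_{v\in \calV_k} g_{k,v}\, v + \sum_{w\in \calW_s} c_w\, w
\]
with each $g_{k,v}$ homogeneous of degree $s-k$ and $c_w \in \K$. Rearranging produces a homogeneous syzygy $\sigma$ of degree $s$ in $Syz(\calV,\calN_s^+)$ whose entry on $f$ is the constant $-1$ and whose entries on $\calN_s^+$ vanish. By Proposition~\ref{linear-gen-2}, I may write $\sigma = \sum_j p_j\, \tau_j$ where each $\tau_j$ is a homogeneous syzygy with all entries of degree at most $1$ and $\deg p_j + \deg \tau_j = s$. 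Looking at the $f$-entry, $-1 = \sum_j p_j\,(\tau_j)_f$; since $(\tau_j)_f$ is homogeneous of degree $\deg\tau_j - s$ and at most linear, only indices with $\deg \tau_j = s$ and $p_j$ scalar can contribute to the degree-$0$ side. For each such $\tau_j$, the entries on elements of degree outside $\{s-1,s\}$ vanish; in particular the entries on $\calN_s^+$ are zero, those on $\calV_{s-1}$ are linear forms, and those on $\calV_s$ are scalars. Thus $\sigma' := \sum_{\deg\tau_j = s} p_j\, \tau_j$ is a syzygy involving only $\calV_{s-1}$ and $\calV_s$ with $(\sigma')_f = -1$, and its defining identity reads
\[
f = \sum_{w\in \calW_s} d_w\, w + \sum_{v\in \calV_{s-1}} \ell_v\, v,
\]
with $d_w \in \K$ and $\ell_v$ a linear form. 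Since each $\ell_v\, v$ lies in $\langle \calV_{s-1}^+\rangle$, this exhibits $f\in \langle \calV_{s-1}^+, \calW_s\rangle$, as desired.

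The step requiring the most care is the graded bookkeeping used to isolate the summands with $\deg \tau_j = s$. Once conventions are fixed so that every $p_j$ and every $\tau_j$ is homogeneous and the products $p_j \tau_j$ all have the same degree $s$ as $\sigma$, this reduces to matching the degree of $(\tau_j)_f$, namely $\deg \tau_j - s$, against the linear-entry bound supplied by Proposition~\ref{linear-gen-2}; the remainder of the argument is a routine translation between syzygies and polynomial identities.
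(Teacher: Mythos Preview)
Your proof is correct and follows essentially the same approach as the paper: both use Proposition~\ref{linear-gen-2} to decompose the given syzygy into generators with entries of degree at most $1$, then perform a degree count on the $f$-entry to isolate a relation involving only $\calV_{s-1}$ and $\calV_s$. The only cosmetic difference is that the paper simply picks a single generating syzygy with nonzero constant $f$-coefficient, whereas you take the scalar combination $\sigma'=\sum_{\deg\tau_j=s}p_j\tau_j$ to force the $f$-coefficient to be exactly $-1$; either choice gives the conclusion immediately.
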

\begin{proof} By hypothesis there exists a syzygy among the elements
  of $\calV= \calV_1 \cup \ldots \cup \calV_s$ such that the
  coefficient of $f$ is a non-zero constant. Hence by 
  Proposition \ref{linear-gen-2}, there exists a homogeneous generator
  of $Syz (\calV, \calN_s^+)$ whose entries have degree at most $1$
  and where the coefficient of $f$ is a non-zero constant. Since $\deg
  f=k$ and the coefficient of $f$ is constant, in this generating
  syzygy only the elements of $ \calV_{k-1} \cup \calV_k$ can have
  non-zero coefficients.
\end{proof}

We now describe two methods  to construct a
minimal set of  generators of the ideal $I(J_1, \ldots, J_s)$  depending on 
whether we start with a set of generators which form a border basis or not.

\begin{prop}\label{minimize-1}  Let $s \in \N$ and let  $\calN=\{\calN_0, \ldots, \calN_s\}$  be a complement  up to degree $s$ of a  proper homogeneous ideal  $J$ in
$\mathcal P$. Given $ \{\calV_1, \ldots, \calV_{s}\}$ where $\calV_k$ is a basis of  $J \cap \langle \calN_{k-1}^+ \rangle $, then for each $k=1, \ldots, s$ it is possible to construct a set of polynomials $\calG_k \subseteq \calV_ k$  such
  that  $\calG_1 \cup \ldots \cup \calG_s$  is a minimal set of
  generators of the ideal $I(J_1, \ldots, J_s)$.
\end{prop}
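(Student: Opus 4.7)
My plan is to construct $\calG_k$ inductively on $k$ by a linear algebra computation modulo $\langle \calV_{k-1}^+ \rangle$, and then to verify minimality of the resulting union via Proposition \ref{linear-gen-2}. Concretely, for $k = 1, \ldots, s$ in turn, I form the matrix whose rows are the coefficient vectors of the polynomials in $\calV_{k-1}^+$ and $\calV_k$ in a monomial basis of $\calP_k$, and row-reduce the $\calV_k$ rows modulo the $\calV_{k-1}^+$ rows. I define $\calG_k$ to be the subset of $\calV_k$ whose rows retain pivots after this reduction; equivalently, $\calG_k$ is a subset of $\calV_k$ whose images in $\calP_k / \langle \calV_{k-1}^+ \rangle$ form a basis of the image of $\langle \calV_k \rangle$ in that quotient.

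The heart of the proof is the dimension identity
\[
\langle \calV_k \rangle \cap \langle J_{k-1}^+ \rangle \;=\; \langle \calV_k \rangle \cap \langle \calV_{k-1}^+ \rangle.
\]
The inclusion $\supseteq$ is immediate since $\calV_{k-1} \subseteq J_{k-1}$. For $\subseteq$, I take $g = \sum c_i v_i$ with $v_i \in \calV_k$ and $g \in \langle J_{k-1}^+ \rangle$, and write $g = \sum_\alpha x_\alpha h_\alpha$ with $h_\alpha \in J_{k-1}$. Expanding each $h_\alpha$ as a polynomial combination of elements of $\calV_1 \cup \cdots \cup \calV_{k-1}$ (using the inductive description $J_j = \langle J_{j-1}^+ \rangle + \langle \calV_j \rangle$ from Lemma \ref{iteration-compl}), these two expressions combine into a homogeneous syzygy of $\calV \cup \calN_s^+$ of total degree $k$. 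By Proposition \ref{linear-gen-2}, this syzygy decomposes as a $\calP$-linear combination of generators with entries of degree at most $1$; a degree count shows that the constant entries at $\calV_k$ can arise only from generators of syzygy-degree exactly $k$, which by the degree-at-most-$1$ constraint can involve only elements of $\calV_{k-1}$ and $\calV_k$. Isolating this piece yields a syzygy in $\calV_{k-1}$ and $\calV_k$ alone, still with the prescribed constants $c_i$ at the $v_i$, and writes $g = \sum_{w \in \calV_{k-1}} L_w \cdot w$ with $L_w \in \calP_1$; that is, $g \in \langle \calV_{k-1}^+ \rangle$.

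Granting this identity, the natural map $\calP_k / \langle \calV_{k-1}^+ \rangle \to \calP_k / \langle J_{k-1}^+ \rangle$ is injective on the image of $\langle \calV_k \rangle$, so the images $\{\bar g : g \in \calG_k\}$ form a basis of $J_k / \langle J_{k-1}^+ \rangle$. Since the dimension of the latter quotient equals the minimum number of ideal generators in degree $k$, this shows both that $\calG_1 \cup \cdots \cup \calG_s$ generates $I(J_1, \ldots, J_s)$ and that no $f \in \calG_k$ lies in the ideal generated by the other elements. The main obstacle I expect is the syzygy-degree bookkeeping in the key lemma: one must verify carefully that isolating the degree-$k$ contributions in the $\calP$-linear decomposition of our syzygy yields by itself an honest syzygy with the correct constant coefficients at $\calV_k$.
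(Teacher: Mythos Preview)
Your proposal is correct and follows essentially the same route as the paper: both reduce the problem to choosing $\calG_k \subseteq \calV_k$ so that $\langle \calV_{k-1}^+ \rangle \oplus \langle \calG_k \rangle = \langle \calV_{k-1}^+ \rangle + \langle \calV_k \rangle$, and both justify this reduction via Proposition~\ref{linear-gen-2} (the paper packages the syzygy argument as Corollary~\ref{gen-syz}, while you re-derive the equivalent identity $\langle \calV_k \rangle \cap \langle J_{k-1}^+ \rangle = \langle \calV_k \rangle \cap \langle \calV_{k-1}^+ \rangle$ directly). Your degree-bookkeeping concern is exactly the content of that corollary's proof, and it goes through for the reason you anticipate: in a homogeneous decomposition of your degree-$k$ syzygy, only the generating syzygies of degree exactly $k$ can have nonzero entries at $\calV_k$, and those involve only $\calV_{k-1}$ and $\calV_k$.
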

\begin{proof} Let $\calG_1= \calV_1$ and assume that  $\calG_1 \cup
  \ldots \cup\calG_{k-1}$ is a minimal set of generators of $I(J_1,
\ldots, J_{k-1})$ with $\calG_i \subseteq \calV_i$ for $i=1, \ldots,
k-1$.

Note that a polynomial $f\in \calV_k$ is redundant w.r.t.  $\calG_1
\cup \ldots \cup \calG_{k-1} \cup \calV_k$ if and only if it is
redundant w.r.t.  $\calV_1 \cup \ldots \cup \calV_k$.  Thus, by
Corollary \ref{gen-syz} it suffices to look for linear relations among
the elements of $\calV_{k-1}^+ \cup
\calV_k$ and for a set $\calG_k \subseteq \calV_k$ such that 
$\langle \calV_{k-1}^+\rangle + \langle \calV_k \rangle = 
\langle \calV_{k-1}^+\rangle \oplus \langle \calG_k \rangle$. Hence it is  sufficient  to find a basis of $\langle \calV_{k-1}^+ \rangle \cap \langle \calV_k \rangle$,
 extend it with elements $w_1,\ldots,w_t$ to a basis of $\langle \calV_k \rangle$ and define $\calG_k=\{w_1,\ldots,w_t\}$.

In order to compute the intersection $\langle \calV_{k-1}^+\rangle \cap \langle \calV_k \rangle$
consider the monomial basis $S = S_1 \cup S_2 \cup S_3$ of $\langle \calV_{k-1}^+\rangle + \langle \calV_k \rangle$, where $S_1=(\partial \calN)_{k-1}^+ \setminus \calN_{k-1}^+$,   $S_2=(\partial
\calN)_k$ and $S_3=\calN_k$. 
Observe that $\calN_{k-1}^+ = (\partial \calN)_k \cup \calN_k$ and
$\calV_{k-1}^+  \subset \langle (\partial \calN)_{k-1}^+ \rangle + \langle \calN_{k-1}^+ \rangle$.
Let $s_i=|S_i|$, for $i=1,2,3$, and
$l=|\calV_{k-1}^+|$; with this notation $|\calV_k|=|(\partial
\calN)_k|=s_2$.

Let  $U$ be the matrix
whose columns contain the coefficients of the polynomials of
$  \calV_{k-1}^+ \cup  \calV_k $ with respect to $S$.  Thus $U$ is a 
$(s_1+s_2+s_3) \times (l+s_2)$
block matrix of the form
$$U=
\left(\begin{array}{c|c}
U_1 & 0\\ \hline
U_2 & U_3\\ \hline
U_4 & U_5
\end{array}\right)$$
\noindent
and, if we denote by $\pi_2:\K^l \times \K^{s_2} \rightarrow \K^{s_2}$ the
projection on the last  $s_2$ coordinates,  the vectors of $\pi_2(\Ker  U)$ are
the coordinates (w.r.t. $\calV_k$) of the vectors of 
$\langle \calV_{k-1}^+\rangle \cap \langle \calV_k \rangle$.

In order to compute $\Ker U$ we can reduce ourselves to consider the matrix 
$$ \widetilde U= \left(\begin{array}{c|c}
U_1 & 0\\ \hline
U_2 & U_3
\end{array}\right). $$
Namely, since $S_3 = \calN_k$, if 
$\widetilde U v=0$, then 
$U v \in J_k \,\cap  \langle \calN_k \rangle =\{0\}$,  hence $\Ker U = \Ker \widetilde U$.

In order to finish the construction it is then  sufficient to
reduce to echelon form the matrix whose rows are generators of $\pi_2(\Ker \widetilde U) $: the indexes of the columns without pivots correspond to  the elements in $\calV_k$ to select for constructing $\calG_k$.
\end{proof}

The proof of the previous proposition guarantees the correctness of the following:

\underline{Algorithm MinimalBasis}

\underline {Input:} 
\begin{enumerate} 
\item [-] $s\in \N$
\item [-] $\{\calN_0, \ldots, \calN_{s}\}$  a complement of a homogeneous ideal $J$ up to degree $s$
\item[-] $\{\calV_1, \ldots, \calV_{s}\}$ where $\calV_k$ is a basis for $J \cap \langle \calN_{k-1}^+ \rangle $
\end{enumerate}

\underline{Output:} $ \{\calG_1,\ldots,\calG_s\}$ where:
\begin{enumerate}
\item [---] $\calG_k \subset \calV_k$ for each $k\in\{1, \ldots, s\}$
\item[---] the polynomials in $\calG_1 \cup
  \ldots \cup\calG_s$ are a minimal set of generators of the ideal $I(J_1,..,J_s)$
\end{enumerate}

\underline{Procedure:}
\begin{enumerate}
\item[-] $\calG_1 =\calV_1$
\item[-] for $k=2..s$ repeat
\begin{enumerate}
\item[] $S_1 :=(\partial \calN)_{k-1}^+ \setminus \calN_{k-1}^+$
\item[] $S_2:= (\partial\calN)_k$
\item[] $l:= | \calV_{k -1}^+ |$
\item[] $s_1 := | S_1 |$
\item[] $s_2 := | S_2 |$
\item[] --- construct the $ (s_1+s_2) \times (l+s_2)$  matrix with columns the $s_1+s_2$ 
\item[] --- coefficients w.r.t. $S_1 \cup S_2$ of the polynomials $v \in \calV_{k-1}^+ \cup \calV_k$.
\item[] $\widetilde U:=$ matrix (coeffs$(v,S_1\cup S_2) \ |\ v \in \calV _{k-1}^+ \cup \calV_k)$ 
\item[] --- compute  the intersection $\langle V_{k-1}^+ \rangle \cap  \langle \calV_k \rangle$
\item[]  $K := \Ker(\widetilde U)$
\item[]  $dk := | K |$
\item[] --- construct the $dk \times s_2$  matrix with rows the last $s_2$ entries of the
\item [] ---  vectors in $K$.
\item[] $MK:= matrix(\pi_2(v) \ | v \in K)$
\item[] $(RMK,\Sigma):= RRE(MK)$
\item[] ---  the polynomials in $\calV_k$ with index not in $\Sigma$ are put in $\calG_k$
\item[] $\calG_k:= \{ v_j \in \calV_k  \ |  j \not \in \Sigma\}$
\end{enumerate}
\end{enumerate}
\medskip

In the case when we start with a border basis, we can improve our previous construction. The computation of the generators of the intersection  $\langle V_{k-1}^+ \rangle \cap  \langle \calV_k \rangle$  can then be accomplished with only some column subtractions, and the construction of each level of the minimal basis  can be completed with one column  echelon reduction  of an $s_2 \times (l-s_1) $ matrix.  The new algorithm is based on the following Proposition whose proof is an immediate consequence of the properties of a border basis.

\begin{prop}\label{minimize-2}\label{rankpredict}  Let $s\in \N$ and let $J$ be a proper homogeneous ideal in
  $\mathcal P$. Assume that $\calN=\{\calN_0, \ldots, \calN_s\}$ is
a complement of  $J$ up to degree $s$ and let 
$\calB=\calB_1 \cup \ldots \cup \calB_s$ be the associated border
basis. With the notation of the previous proof, if $ \widetilde U= \left(\begin{array}{c|c}
U_1 & 0\\ \hline
U_2 & U_3
\end{array}\right) $ we have: 
\begin{enumerate}
\item[(i)] after reordering  the elements of $\calB_k$ we can assume that the $s_2 \times s_2$
  block $U_3$ is the identity matrix $I_{s_2}$
\item[(ii)] each element in $U_1$ is either $0$ or $1$; more precisely
  each row in $U_1$ contains at least one element equal to $1$
  and each column in $U_1$ contains at most one element equal to $1$
\item[(iii)]by means of finitely many
subtractions performed on the left $l$ columns of $\widetilde U$ we can reduce
$\widetilde U$ to the form
$$\widehat U=
\left(\begin{array}{c|c|c}
I_{s_1} & 0 & 0\\ \hline
P_1 & P_2 & I_{s_2}
\end{array}\right)$$
\item[(iv)] the set of the columns of the  $s_2 \times (l-s_1)$ matrix $P_2$ is a basis of $\pi_2(\Ker \widetilde U)=\pi_2(\Ker \widehat U)$
\item[(v)] $\widehat U$ has full row-rank and so 
$\dim \Ker \widetilde U = \dim \Ker \widehat U = l-s_1$
\item[(vi)] reducing $P_2$ to echelon  form by column operations, the pivots indicate the redundant  elements in  $\calB_k$.
\end{enumerate}
\end{prop}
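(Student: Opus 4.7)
The plan is to verify the six claims in order; each follows directly from the defining shape of border polynomials together with elementary block-matrix arithmetic on $\widetilde U$. For (i) and (ii): every $b \in \calB_k$ has the form $b = m + \psi(m)$ with $m \in (\partial\calN)_k = S_2$ and $\psi(m) \in \langle \calN_k \rangle = \langle S_3 \rangle$, so its $S_2$-coefficient vector is the indicator of $m$; reindexing $\calB_k$ to match the chosen ordering of $S_2$ therefore makes $U_3 = I_{s_2}$. For $x_j b \in \calB_{k-1}^+$ with $b = m + \psi(m) \in \calB_{k-1}$, write $x_j b = x_j m + x_j \psi(m)$: the summand $x_j \psi(m)$ lies in $\langle \calN_{k-1}^+ \rangle = \langle S_2 \cup S_3 \rangle$ and hence has no $S_1$-support, while $x_j m \in (\partial\calN)_{k-1}^+$ contributes a $1$ in row $x_j m$ precisely when $x_j m \in S_1$. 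This gives (ii): entries of $U_1$ are $0$ or $1$, each column carries at most one $1$, and every $\mu \in S_1$ factors as $x_j m'$ with $m' \in (\partial\calN)_{k-1}$, producing a column with a $1$ in row $\mu$.

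For (iii) and (v): the combinatorics of $U_1$ supplied by (ii) admits a system of distinct representatives, namely a choice of one column containing a $1$ for each row of $U_1$. Reordering the $\calB_{k-1}^+$-block of columns so that these $s_1$ representatives come first makes the top-left $s_1 \times s_1$ submatrix into $I_{s_1}$; each of the remaining $l - s_1$ columns carries at most one $1$ in $U_1$, which is cleared by a single subtraction of the appropriate representative. The result is $\widehat U$. Its rows are manifestly independent: the top $s_1$ rows are standard basis vectors in the first $s_1$ columns and vanish on the last $s_2$, while the bottom $s_2$ rows restrict to $I_{s_2}$ on the last $s_2$ columns, so no nontrivial combination of all rows can be zero. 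Hence $\rk \widehat U = s_1 + s_2$ and $\dim \Ker \widehat U = l - s_1$; since the column operations used act only on the first $l$ coordinates they preserve $\dim \Ker$, giving $\dim \Ker \widetilde U = \dim \Ker \widehat U$, which is (v).

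For (iv) and (vi): the same observation—that the column operations touch only the first $l$ coordinates—implies that they commute with $\pi_2$, so $\pi_2(\Ker \widetilde U) = \pi_2(\Ker \widehat U)$. Writing a kernel vector of $\widehat U$ as $(x, y, z) \in \K^{s_1} \times \K^{l-s_1} \times \K^{s_2}$, the block shape of $\widehat U$ forces $x = 0$ and $z = -P_2 y$, so $\pi_2(\Ker \widehat U) = \{-P_2 y : y \in \K^{l-s_1}\}$ coincides with the column span of $P_2$; this is (iv). Claim (vi) is then immediate from the proof of Proposition \ref{minimize-1}: the redundant elements of $\calB_k$ are indexed precisely by the pivot positions of any echelon reduction of a generating set of $\pi_2(\Ker \widetilde U)$, and (iv) identifies $P_2$ as such a generating set. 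The argument is elementary throughout; the only subtle point is tracking that $\pi_2$ commutes with the column operations producing $\widehat U$, which is exactly what allows $P_2$ to replace the more expensive kernel computation in the MinimalBasis algorithm.
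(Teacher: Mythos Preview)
Your argument is correct and supplies precisely the details the paper omits (the paper itself says only that the proposition ``is an immediate consequence of the properties of a border basis''). One small observation: in (iv) you establish that the columns of $P_2$ \emph{span} $\pi_2(\Ker\widetilde U)$ rather than that they form a basis; in fact linear independence can fail (it would amount to the polynomials in $\calB_{k-1}^+$ being linearly independent, which need not hold), so your weaker conclusion is actually the correct one, and it is all that (vi) and the minimising algorithm require.
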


\section{Curves given by points}\label{bypoints}

A natural application of the results of the previous section is the
construction of the ideal of an irreducible projective curve 
starting from the knowledge of a finite 
set of points on it. Let us recall the following classic result:

\begin{prop}\label{enough-points} 
Assume that $C$ is an irreducible projective curve in $\pp^n(\K)$ of
  degree $d$. Let $\calR=\{R_1, \ldots, R_h\}$ be a set of points on
  $C$.
\begin{enumerate}
\item For all $s\in \N$ such that $h>sd$, we have $\calI(C)_{\leq s} =
  \calI(\mathcal R)_{\leq s}$.
\item If $\calI(C)$ can be generated by polynomials of degree at most $m$
  and $h>md$,
  then $\calI(C)=\mathcal P\cdot \calI(\mathcal R)_{\leq m}$
\end{enumerate}
\end{prop}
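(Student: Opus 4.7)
The proof should run along essentially classical lines, with the key ingredient being Bezout's theorem as alluded to in the introduction. My plan is to prove (1) directly, and then obtain (2) as a quick corollary.

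First, note that the inclusion $\calI(C)_{\leq s}\subseteq \calI(\calR)_{\leq s}$ is immediate: since $\calR\subseteq C$, any polynomial vanishing on $C$ also vanishes on $\calR$. So the real content is the reverse inclusion, and this is where Bezout enters. Because $\calI(\calR)$ is homogeneous, an element of $\calI(\calR)_{\leq s}$ decomposes into homogeneous components of degree at most $s$, each of which separately vanishes on $\calR$. Hence it suffices to show that any homogeneous $f\in\calI(\calR)_k$ with $k\leq s$ lies in $\calI(C)$.

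For this, I would argue by contradiction: suppose $f\notin \calI(C)$. Since $C$ is irreducible and $f$ is homogeneous, the intersection $V(f)\cap C$ is a proper closed subset of $C$, hence finite. Bezout's theorem then gives $|V(f)\cap C|\leq k\,d$ (even counting multiplicities one obtains equality, but the inequality is what we need). However $\calR\subseteq V(f)\cap C$ and so $h\leq kd\leq sd$, contradicting the hypothesis $h>sd$. Therefore $f\in\calI(C)_k$, which completes the proof of (1).

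For (2), set $s=m$ and apply part (1) to conclude $\calI(C)_{\leq m}=\calI(\calR)_{\leq m}$. By hypothesis $\calI(C)$ admits a system of generators of degree at most $m$, so these generators all lie in $\calI(C)_{\leq m}$ and hence $\calI(C)=\calP\cdot \calI(C)_{\leq m}$. Substituting the equality from (1) yields $\calI(C)=\calP\cdot \calI(\calR)_{\leq m}$, as required.

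There is no real obstacle here: the argument is a direct application of Bezout, with the only mild care being the reduction to homogeneous components (which is automatic from the homogeneity of $\calI(\calR)$) and the observation that $f\notin\calI(C)$ together with irreducibility forces the scheme-theoretic intersection $V(f)\cap C$ to be zero-dimensional so that Bezout's bound is applicable.
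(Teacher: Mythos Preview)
Your proof is correct and follows essentially the same approach as the paper: both reduce to homogeneous $f$ of degree $k\le s$, and both invoke B\'ezout together with the irreducibility of $C$ to conclude that $f$ vanishing on $h>sd\ge kd$ points forces $V(f)\supseteq C$. Your version phrases this as a contradiction and spells out the trivial inclusion and the reduction to homogeneous components, but the argument is the same.
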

\begin{proof} (1) It suffices to prove that 
$\calI(\mathcal R)_k \subseteq \calI(C)_k$ for all $k\leq s$. If $f\in
  \calI(\mathcal R)_k$, the polynomial $f$ vanishes on $h > sd \geq kd
  = \deg f \cdot \deg C$ points.  Since $C$ is irreducible, by
  B\'ezout's Theorem the hypersurface $V(f)$ contains $C$, i.e. $f\in
\calI(C)_k$.

(2) By hypothesis $\calI(C)=\mathcal P\cdot \calI(C)_{\leq m}$, thus the
    result follows immediately from (1).
\end{proof}

The previous result allows us to reduce the construction of
$\calI(C)$ to the computation of a set of generators for  the
ideal $J=\calI(\calR)$ where $\calR=\{R_1, \ldots, R_h\}$ is a set of
$h$ points in $\pp^n(\K)$. 
By Proposition \ref{Bkgenerates} this can be done by computing a
border basis of $J$. In the case of an ideal of points, we 
are able to compute $J_k \,\cap   \langle \calN_{k-1}^+ \rangle $ using the point
evaluation maps.

Assume that we have computed $\calN_{k-1}$ and let
  $\calN_{k-1}^+=\{m_1 ,\ldots, m_t\}\subseteq \calP_k$.  Consider the
  $h \times t$ evaluation matrix
$$M_{\mathcal R}=\left( \begin{array}{ccc} m_1(R_1) & \ldots &
m_t(R_1) \\ \vdots && \vdots \\ m_1(R_h) & \ldots &
m_t(R_h)\end{array} \right)$$ where, if $R_i=[r_{i,0}, \ldots,
r_{i,n}]$, by $m_j(R_i)$ we mean $m_j(r_{i,0}, \ldots, r_{i,n})$.
Note that the rank of the matrix $ M_{\mathcal R}$ and its null-space
$\Ker M_{\mathcal R}$ does not depend on the chosen representation of
the points in the projective space. Note also that each vector in
$\Ker M_{\mathcal R}$ is the vector of the coordinates of a polynomial
in $J_k \,\cap  \langle \calN_{k-1}^+ \rangle $ w.r.t. the basis $\{m_1 ,\ldots,
m_t\}$. In particular $\dim \Ker M_{\mathcal R} = \dim (J_k \,\cap
 \langle \calN_{k-1}^+ \rangle )$.

Performing  Gaussian elimination by rows,  followed if necessary by a permutation of the columns (which corresponds to a permutation of the basis $\{m_1 ,\ldots, m_t\}$), we can assume that $\calM_{\mathcal R} =
\left(\begin{array}{c|c} I_r & A\\ \hline 0 &
        0 \end{array}\right)$. In this way the columns of
    $\left(\begin{array}{c} - A\\ \hline I_{t-r}\end{array}\right)$ are a
    basis of the null-space $\Ker M_{\mathcal R}$. If we choose as
    $\calN_k$ the first $r$ monomials in the permuted basis, we have  the null-space in border form, which gives us $\calB_k$.  
\smallskip

We now want to estimate the complexity of our procedure to compute a
minimal set of generators up to degree $s$ of the ideal $J$ of $h$
distinct points. 

Our algorithm first computes a border basis up to degree $s$, then minimizes
this basis removing redundant elements. The basic tool is Gaussian
elimination; recall that the complexity of Gaussian
elimination performed on a $m \times p$ matrix  is
$O(mp \min(m,p))$.

As for the first phase to compute the border basis, the $k$-th step of
the recursive procedure described above to compute $\calN_k$ and
$\calB_k$ requires to perform Gaussian elimination on the $h \times t$
matrix $M_{\mathcal R}$. 
As already observed, $\dim \Ker M_{\mathcal R} = \dim (J_k \,\cap
 \langle \calN_{k-1}^+ \rangle )$; hence, by Remark \ref{compl-prop}, $\dim \Ker
M_{\mathcal R} = t- \dim  \langle \calN_k \rangle $. In particular $\dim
 \langle \calN_k \rangle =|\calN_k|=\rk M_{\mathcal R} \leq h$.  Thus for each $i=1,
\ldots, s$ we have that $|\calN_i|\leq h$ and hence $|\calN_i^+|\leq
(n+1)h$.  Therefore the complexity of each step of the algorithm is
$O(nh^3)$. As a consequence the complexity of the recursive algorithm
in $s$ steps to compute a border basis of $I(J_1, \ldots, J_s)$ is $O(snh^3)$.

As for the minimizing phase outlined in Proposition \ref{minimize-2},
note that for each $k$ we have that 
$|\calB_k|=|(\partial \calN)_k|\leq (n+1)h$, \ 
$|\calB_{k-1}^+|\leq (n+1)^2h$ and hence 
 $$|\widetilde{\calB_k}|=|\calB_{k-1}^+ \cup \calB_k| \leq
(n+1)(n+2)h.$$
Moreover the distinct monomials appearing in the polynomials of 
$\calB_{k-1}^+ \cup \calB_k$ form a subset of $\calN_{k-2}^{++}$ and
therefore they are at most $(n+1)^2h$.

Since the number of the left columns in $U$ is $|\calB_{k-1}^+|\leq
(n+1)^2h$, when we reduce the matrix $U$ to the form $\widehat U$ by
means of subtractions among the left columns, we need at most
$(n+1)^2h$ column subtractions. The length of each of these columns is
at most $(n+1)^2h$, thus the complexity of the reduction of $U$ to
$\widehat U$ is $O(n^4h^2)$. 

The last part of the minimizing phase requires to reduce the matrix
$P_2$ to echelon form by column operations. The number of rows of
$P_2$ is equal to $|(\partial \calN)_k| \leq (n+1)h$,
while the number of its columns is $ \leq (n+1)^2h$. Thus 
the complexity of the algorithm to reduce $P_2$ is $O(n^4h^3)$.
 
Hence the complexity of the algorithm outlined in Proposition \ref{minimize-2} to compute a minimal set of generators for $J_{\leq s}$ is $O(sn^4h^3)$.

\section{Curves in $\pp^n(\C)$ given by approximate points}\label{numeric}

In this section we consider the situation where the points
$\calR=\{R_1, \ldots, R_h\}$ on the irreducible projective curve $C$
in $\pp^n(\C)$ are given only approximately. In this case we can
perform the computations needed for the described procedure by
replacing Gaussian elimination by more suitable and numerically stable
tools.

As observed in Section \ref{BB}, if one is only interested in
computing a set of generators of $I(J_1, \ldots, J_s)$ (not
necessarily a border basis up to degree $s$), it is sufficient to
compute a basis of $J_k \,\cap
 \langle \calN_{k-1}^+ \rangle $  and compute a complement $\calN_k$ for each $k$ (see Remark \ref{complsuffices}).  The
former task corresponds to computing a basis of the null-space of the $h
\times t$ matrix
$$M_{\mathcal R}=\left( \begin{array}{ccc} m_1(R_1) & \ldots &
m_t(R_1) \\ \vdots && \vdots \\ m_1(R_h) & \ldots &
m_t(R_h)\end{array} \right)$$
where $\{m_1 .\ldots, m_t\}=\calN_{k-1}^+$.

A numerically stable way to compute both the rank and an
orthogonal basis of $\Ker M_{\mathcal R}$ is the SVD-algorithm which
assures that one can find a unitary $h \times h$ matrix $U$, a
unitary $t \times t$ matrix $V$ and an
$h \times t$ real matrix $\Sigma$ 
such that $M_{\mathcal R}=U\Sigma \overline V^t$; the elements
$\sigma_{ij}$ of the matrix $\Sigma$ are zero whenever $i\ne j$ and
for $i= 1, \ldots, l=\min\{h,t\}$ we have $\sigma_{1,1} \geq \ldots \geq \sigma_{l,l}\geq 0$. 

Either we know the rank of $M_{\mathcal R}$ (see for instance
Proposition \ref{dimIs}) or we can examine the
singular values $\sigma_i$ of $\Sigma$ in order to obtain a rank
determination as in \cite{Golub}. In any case if $\rk M_{\mathcal R}=r$,
then by the properties of the SVD decomposition the last $t-r$ columns
of $V$ are an orthogonal basis of $\Ker M_{\mathcal R}$.

In order to compute $\calN_k$  and continue to the next step, 
we take advantage of the stability properties of the QRP-algorithm which, given a matrix $M$, 
constructs a unitary matrix $Q$, a permutation matrix $P$ and an upper-triangular matrix $R=
\left(\begin{array}{c|c} R_1 & R_2 \end{array}\right)$ such that $S=QRP$. 
The permutation matrix $P$ exchanges columns in order to improve the condition number 
of the matrix $R_1$. If $M$ has full row-rank, then $R_1$ is invertible; otherwise it is possible to use the diagonal elements of $R_1$ to make a rank determination of $M$.

In our case we apply the QRP-algorithm to the  $(t-r)\times t$ matrix $S$  whose rows are the
last $t-r$ columns of $V$; recall that the columns of $S$ are indexed by $\calN_{k-1}^+$.
In the decomposition $S=QRP$ the columns of $R$ are a permutation of the columns 
of $S$ and the monomials corresponding to the columns of $R_1$ will be chosen 
to be border monomials, while the monomials corresponding to the columns of $R_2$ 
will be chosen as the complement $\calN_k$. 
Observe that the rows of $R$ correspond to a new basis for $J_k
\,\cap  \langle \calN_{k-1}^+ \rangle $.

If we want to compute a border basis of $I(J_1, \ldots, J_s)$, we can compute the
matrix $R_1^{-1}R=\left(\begin{array}{c|c} I & R_1^{-1}R_2
\end{array}\right)$ whose rows correspond to a basis $\calB_k$ of
$\Ker M_{\mathcal R}$ consisting of border polynomials. 

Otherwise, if we want to compute a minimal set of
generators of $I(J_1, \ldots, J_s)$, we can proceed as described in the algorithm
  MinimalBasis using SVD to compute kernels and QRP
  to select stable pivot columns.  Using the notation of Proposition \ref{minimize-1}, 
the first step is to compute a basis of $\Ker \widetilde U$. 
In order to do this, we use an SVD construction taking into account that, by Proposition \ref{rankpredict} (v)
 $ \dim \Ker \widetilde U = \dim \Ker  U = l -s_1$.
We then apply the QRP-algorithm to the matrix $N$ whose rows contain the projection 
by $\pi_2$ of a set of generators of $\Ker \widetilde U$ and whose columns are indexed by the generators of 
$J_k\,\cap  \langle \calN_{k-1}^+ \rangle $.  We thus obtain $N=Q'R'P'$. Examining the diagonal elements of 
$R'$ we can determine its rank $r'$; the columns of $R'$ correspond to a permuted basis 
of $J_k\,\cap  \langle \calN_{k-1}^+ \rangle $ and the generators corresponding to the first $r'$ 
columns are redundant and can be discarded.

\smallskip

Exact Gaussian elimination, SVD and QRP-algorithm applied to an $m
\times n$ matrix all have the same complexity $O(mn \min(m,n))$.
In the approximate algorithm the computation of the null-space using
SVD has a complexity $O(nh^3)$
and is followed by a QRP-algorithm which has a complexity $O(n^3h^3)$.
Thus the complexity to compute a set of generators
or a border basis of $I(J_1, \ldots, J_s)$ is $O(sn^3h^3)$, while the
complexity to compute a minimal set of generators of $I(J_1, \ldots,
J_s)$ by the algorithm MinimalBasis    is $O(sn^6h^3)$.

Alternatively, after computing a border basis, we can give a numerical algorithm based on Proposition   \ref{minimize-2}, which would reduce the complexity to $O(sn^4h^3)$ with a slight loss of numerical precision.

\begin{ex} We implemented our algorithm in Octave. Here is the result we obtained when we tested it   on the following parametric sextic space curve $C$ taken from \cite{Jia}:

$\begin{array}{cl}
x =\! & 3 s^4 t^2-9 s^3 t^3-3 s^2 t^4+12 st^5+6 t^6 \\
y =\! & -3 s^6 + 18 s^5 t -27 s^4 t^2-12 s^3 t^3+33 s^2 t^4+6 s t^5-6 t^6 \\
z =\! & s^6 - 6 s^5 t + 13 s^4 t^2 -16 s^3 t^3 + 9 s^2 t^4 + 14 s t^5 -6 t^6 \\
w =\! & -2 s^4 t^2 + 8 s^3 t^3-14 s^2 t^4+20 s t^5-6 t^6.
\end{array}
$

By Theorem \ref{Peskine} the ideal of this curve of degree 6 in $\pp^3(\C)$ can be generated by polynomials of degree at most $5$.

We chose $31 > 6\cdot 5$ points using roots of unity of the following form:
$$  s=1;\quad   t = exp(2\pi i k/31) \quad k\in\{1, \ldots, 31\}.$$

Running the algorithm BorderBasisWithComplement, we obtained no polynomials of degree $1$ (showing that the ideal is not contained in any hyperplane), no polynomials of degree $2$, $4$ polynomials of degree $3$, $11$ of degree $4$ and $22$ of degree $5$, yielding a set of generators for the ideal in $.08$ seconds. 
Among the $20$ monomials of degree $3$ the QRP-algorithm chose  $ {z \sp 2} x, {y x w}, {y \sp 2}  w, {z \sp 2} w$ as border monomials and the remaining $16$ as generators of a complement.

We obtained a minimal basis consisting of only the 4 polynomials of degree 3 in an additional time of $.02$ seconds: 
\smallskip

$\begin{array}{ll}
f_1=\! & {\bf {z \sp 2} x} + {0.0666666666 6} \,{z \sp 2} y+ {{0.0680555555 5} \, z 
{y \sp 2}}  - {0.0361111111} \, z y x  \\
 & -{{0.2833333333} \, z  y w} -{{0.55} \, z  {x \sp 2}} -{{1.066666667} \,  z  x w}+ {{0.01527777778}\, {y \sp 3}}   \\
 & - {{0.0916666666 6} \,  {y \sp 2} x}+{{0.3055555555} \, y {x \sp 2}}+{{0.1833333334} \, 
{x \sp 2} w},
\end{array}$
\smallskip

$\begin{array}{ll}
f_2 =\! &  {\bf {y x w}}  + {0.2} \,  {z \sp 2} y + {{0.1416666667} \,  z  {y \sp 2}} - {{0.4833333333} \,  z yx} 
-{{0.1} \,  z  y  w} \\
 & -{{0.9} \,  z {x \sp 2}}   - {{0.2000000001} \,  z x w} + {{0.025} \,  {y \sp 3}} -{{0.15} \,  {y \sp 2}  x} + {{0.5} \,  y {x \sp 2}}  \\
 & + {{0.3000000001} \,  {x \sp 2}  w},
\end{array}$
\smallskip

$\begin{array}{ll}
f_3 =\! & {\bf {{y \sp 2}  w}} -{{0.8} \,  {z \sp 2} y} - {{0.3166666667} \,  z {y \sp 2}} - 
{{0.5666666667} \,  z yx} + {{0.4} \,  z  y  w}  \\
 & + {{0.6000000001} \,  z  {x \sp 2}} + {{0.8000000002} 
\, z  x  w} -{{0.0166666666 6} \,  {y \sp 3}} \\
 & + {{0.0999999999 9} \,  {y \sp 2} x}  -{{0.3333333333} \,  y  {x \sp 2}} 
- {{0.2000000002} \,  {x \sp 2}  w},
\end{array}$
\smallskip

$\begin{array}{ll}
f_4 =\! & {\bf {{z \sp 2} w}}-{{0.6666666667} \,  {z \sp 3}} 
-{{0.162962963} \,  {z \sp 2} y}+{{0.0604938271 7} \,  z  
{y \sp 2}}  \\ 
 & - {{0.0320987654 5} \  z y  x}+{{0.9703703704} \,  z y  w} 
-{{0.4888888888} \,  z {x \sp 2}} \\
& +{{0.3851851853} \,  z x  w}  -{{0.1666666667} \,  z  {w \sp 2}}+{{0.0135802469 1} \,  {y \sp 3}} \\
 & -{{0.0814814814 9} \,  {y \sp 2} x}+{{0.2716049383} \,  y  {x \sp 2}} 
-{{0.9444444445} \,  y  {w \sp 2}}  \\
& + {{0.1629629629} \,  {x \sp 2}  w} 
-{{0.2222222224} \,  x  {w \sp 2}}
\end{array}$
\smallskip

Using continued fractions, we then attempted to convert the coefficients from floating point numbers  to rational numbers, obtaining the following polynomials:
\smallskip

$\begin{array}{ll}
f_1 =\! & {\bf {{z \sp 2} x}} +{{1 \over {15}} \  {z \sp 2} y}+{{{49} \over {720}} \  z  {y \sp 
2}}+{{{11} \over {720}} \  {y \sp 3}}-{{{13} \over {360}} \  
z y  x} -{{{11} \over {120}} \  {y \sp 2} x} -{{{11} \over {20}} \  z 
{x \sp 2}} \\
 & +  {{{11} \over {36}} \  y  {x \sp 2}} -{{{17} \over {60}} \  z 
 y  w} -{{{16} \over {15}} \  z  x w}+{{{11} \over {60}} \  {x \sp 
2} w}, 
\end{array}$
\smallskip

$\begin{array}{ll}
f_2 =\!& {\bf {y x  w}}+{{1 \over 5} \  {z \sp 2} y}+{{{17} \over {120}} \  z {y 
\sp 2}}+{{1 \over {40}} \  {y \sp 3}} -{{{29} \over {60}} \  z y  x} 
-{{3 \over {20}} \  {y \sp 2}  x} -{{9 \over {10}} \  z  {x \sp 2}}+{{1 
\over 2} \  y  {x \sp 2}}  \\
 & - {{1 \over {10}} \  z  y  w} -{{1 \over 5} \  
z  x  w}+{{3 \over {10}} \  {x \sp 2}  w}, 
\end{array}$
\smallskip

$\begin{array}{ll}
f_3 =\! & {\bf {{y \sp 2}  w}}-{{4 
\over 5} \  {z \sp 2}  y} -{{{19} \over {60}} \  z  {y \sp 2}} -{{1 \over 
{60}} \  {y \sp 3}} -{{{17} \over {30}} \  z y  x}+{{1 \over {10}} \  {y 
\sp 2} x}+{{3 \over 5} \  z {x \sp 2}} -{{1 \over 3} \  y  {x \sp 
2}} \\
 & + {{2 \over 5} \  z y w}+{{4 \over 5} \  z x 
w} -{{1 \over 5} \  {x \sp 2}  w}, 
\end{array}$
\smallskip

$\begin{array}{ll}
f_4 =\! & {\bf {{z \sp 2} w}}-{{2 \over 3} \  {z \sp 3}} -{{{22} 
\over {135}} \  {z \sp 2} y}+{{{49} \over {810}} \  z {y \sp 2}}+{{{11} 
\over {810}} \  {y \sp 3}} -{{{13} \over {405}} \  z y x} -{{{11} \over 
{135}} \  {y \sp 2} x}  \\
 & -{{{22} \over {45}} \  z  {x \sp 2}} + {{{22} \over 
{81}} \  y {x \sp 2}}+{{{131} \over {135}} \  z y w}+{{{52} \over {135}} \  z x  w}+{{{22} \over {135}} \  {x \sp 2}  w} 
-{{1 \over 6} \,  z  {w \sp 2}} -{{{17} \over {18}} \  y  {w \sp 2}}  \\
 & -{{2 \over 9} \,  x {w \sp 2}}.
 \end{array}$
 
The floating point coefficients were sufficiently accurate to recover the exact rational coefficients and the previous polynomials generate the exact ideal of the curve over $\Q$.
\end{ex}

\section{Degree bounds for ideal generators}\label{bounds}

Let $C$ be an irreducible projective curve (seen as a set of points in
$\pp^n(\K)$) and let $I=\mathcal I(C)\subset \mathcal P=\K[x_0,
\ldots, x_n]$ be the prime homogeneous ideal consisting of all the 
polynomials vanishing on $C$.  In Section \ref{bypoints}
we saw that the computation of $I$ can be reduced to the computation
of the ideal of sufficiently many points on the curve (see Proposition
\ref{enough-points}). In order to be sure that one has enough points,
it is necessary to  bound the degrees of
generators of the ideal $I$. Such a bound can be obtained from the
\emph{regularity} of the curve.

Recall that if $0 \to \ldots \to F_1 \to F_0 \to J \to 0$ is a graded
 free resolution of a polynomial ideal $J$ , we say that $J$ is
$k$-regular if each $F_j$ can be generated by polynomials of degree
$\leq k+j$. Then we call \emph{regularity} of $J$ the integer $reg
(J)= \min \{k\ |\ J \mbox{ is $k$-regular}\}$.

\smallskip

If $reg(C)=reg(\mathcal I(C))=m$, then (see for instance \cite{Eisenbud-syz})
$I$ can be generated by homogeneous polynomials of degree at most $m$;
moreover the Hilbert function $HF_I(s)$ and the Hilbert polynomial
$HP_I(s)$ of $\mathcal P /I$ take the same value for all integers $s\geq
m$.

The next result gives information about the regularity of the curve as
a function of the curve degree and of the dimension of the embedding
projective space:
 
\begin{teo}[Gruson-Lazarsfeld-Peskine \cite{Peskine}]\label{Peskine}
  Let $\mathcal D\subset \pp^n(\K)$ be a reduced and irreducible curve
  of degree $d$ not contained in any hyperplane, with $\K$
  algebraically closed and $n\geq 3$. Then $reg(\mathcal D)\leq
  d-n+2.$ \\ Moreover, if $\mathcal D$ has genus $g>1$ then
  $reg(\mathcal D)\leq d-n+1.$
\end{teo}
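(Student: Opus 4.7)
The plan is to translate the bound $\mathrm{reg}(\mathcal D)\le d-n+2$ into a collection of sheaf cohomology vanishings for the ideal sheaf of $\mathcal D$ and then to establish those vanishings from an explicit resolution of $\mathcal O_{\mathcal D}$ on $\pp^n(\K)$ built from the syzygy bundle of $\mathcal O_{\mathcal D}(1)$. By the Castelnuovo--Mumford criterion it is enough to verify
$$H^1(\pp^n(\K),\mathcal I_{\mathcal D}(d-n+1))=0 \quad\text{and}\quad H^2(\pp^n(\K),\mathcal I_{\mathcal D}(d-n))=0,$$
since the remaining cohomologies of $\mathcal I_{\mathcal D}$ in the relevant range vanish for dimension reasons (a curve has codimension $\ge 2$ in $\pp^n(\K)$ when $n\ge 3$).

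Next I would introduce the Lazarsfeld syzygy bundle. Setting $V=H^0(\pp^n(\K),\mathcal O(1))$ and restricting the evaluation sequence to $\mathcal D$ gives
$$0\to M_V\to V\otimes \mathcal O_{\mathcal D}\to \mathcal O_{\mathcal D}(1)\to 0,$$
where $M_V$ is a rank $n$ vector bundle on the normalisation of $\mathcal D$. Taking exterior powers yields the Koszul-type complex
$$\cdots\to \bigwedge^{p+1}V\otimes\mathcal O_{\mathcal D}(-p-1)\to \bigwedge^{p}V\otimes\mathcal O_{\mathcal D}(-p)\to\cdots\to \mathcal O_{\mathcal D}\to 0,$$
and a standard spectral sequence / iterated long exact sequence chase reduces both vanishings above to statements of the form $H^1(\mathcal D, \bigwedge^{q} M_V\otimes \mathcal O_{\mathcal D}(k))=0$ in an appropriate numerical range for $(q,k)$.

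The main step is then the vector bundle vanishing itself. For this I would take a general hyperplane $H\subset \pp^n(\K)$ so that $\Gamma=\mathcal D\cap H$ is a set of $d$ reduced points in \emph{uniform position} (the Generic Position Theorem), restrict the defining sequence to obtain
$$0\to \bigwedge^{q} M_V\otimes \mathcal O_{\mathcal D}(k-1)\to \bigwedge^{q} M_V\otimes \mathcal O_{\mathcal D}(k)\to \bigwedge^{q} M_V\bigl|_{\Gamma}\to 0,$$
and run a double induction on $q$ and $k$. The key numerical input is the Castelnuovo bound stating that points in uniform position in $\pp^{n-1}(\K)$ impose independent conditions on forms of degree $\ge \lceil (d-1)/(n-1)\rceil$; this is what forces $d-n+2$ to appear. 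The hardest and genuinely deep ingredient is to propagate uniform position from the sheaf $M_V$ to all of its exterior powers $\bigwedge^q M_V$: this is where the non-degeneracy assumption $n\ge 3$ is used in an essential way, and it is the step that I would invoke wholesale from \cite{Peskine} rather than reprove.

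Finally, for the genus refinement $\mathrm{reg}(\mathcal D)\le d-n+1$ when $g>1$, the plan is to gain one extra twist in the Castelnuovo estimate: the existence of a non-trivial canonical (or more generally special) divisor forces the Hilbert function of $\Gamma$ to grow by at least one unit before reaching $d$, which shortens the inductive argument by exactly one step and shifts the target by $-1$. Again I would cite \cite{Peskine} for the precise form of this strengthening. The main obstacle is the uniform-position control of the exterior powers described above; every other step is a more or less formal consequence of the Castelnuovo--Mumford machinery.
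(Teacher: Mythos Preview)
The paper does not prove this theorem at all: it is stated as a classical result of Gruson, Lazarsfeld and Peskine and simply cited from \cite{Peskine}, with no argument given. So there is no ``paper's own proof'' to compare against; the authors use the bound as a black box to control the number of points needed in their algorithm.

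Your outline is a reasonable sketch of the strategy in the original \cite{Peskine}: reduce regularity to the Castelnuovo--Mumford vanishing criterion, introduce the kernel bundle $M_V$ of the evaluation map, and control $H^1$ of its exterior powers via a general hyperplane section in uniform position. Two comments, though. First, your proposal is somewhat circular as written: you defer both the uniform-position control of $\bigwedge^{q}M_V$ and the genus refinement to \cite{Peskine} itself, which is exactly the reference whose content you are supposed to be proving. If the intent is an independent proof you would need to supply those steps; if the intent is an expository sketch, it is fine but should be labelled as such. Second, a technical slip: $M_V$ is a bundle on $\mathcal D$ (or its normalisation), not a resolution of $\mathcal O_{\mathcal D}$ on $\pp^n(\K)$; the passage from vanishing of $H^1(\bigwedge^{q}M_V(k))$ on the curve to vanishing of $H^i(\mathcal I_{\mathcal D}(j))$ on $\pp^n(\K)$ goes through the Eagon--Northcott/Koszul resolution and a spectral sequence, and that link should be stated more carefully than ``a standard spectral sequence chase''. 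None of this is fatal, but as it stands the proposal is an outline of \cite{Peskine} rather than a self-contained proof.
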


The previous result and many of the degree bounds apply to curves not
contained in any hyperplane, i.e. non-degenerate, which happens if and
only if $I_1= \mathcal I(C)_1=(0)$. On the other hand a degenerate
curve in $\pp^n(\K)$ is a non-degenerate curve in the projective
subspace $V(I_1)$ having dimension $n-\dim I_1$. So the bounds of this
section apply to degenerate curves if we replace $n$ by $n-\dim I_1$.

\begin{rem}\label{paramcurve} A parametrization of a curve
  $C\subset \pp^n(\K)$ can be regarded as a machine delivering as many
  points on the curve as needed. Moreover if $\psi\colon \pp^1(\K) \to
  \pp^n(\K)$ is a rational map whose image is the curve $C$  and
  $\psi([t_0,t_1])= [F_0(t_0,t_1), \ldots, F_n(t_0,t_1)]$ with $F_i(t_0,t_1)$
  homogeneous polynomials of degree $s$, then $\deg C \leq s$. 
Using this upper bound for the curve degree, via
  Theorem \ref{Peskine} we obtain an upper bound for the degrees of
  generators of $C$. This allows us to use our procedure to
  compute generators of the ideal $I$ of the curve and in particular
  to compute an implicit representation of $C$ starting from a
  parametric one.  \qed\end{rem}

Sharper bounds for the degrees of generators can be obtained
for certain curves obtained as the image of the embedding given by a
complete linear series. A first result in this direction concerns
canonical curves (see for instance \cite{Saint-Donat-Petri}):

\begin{teo}[Petri \cite{Petri}] \label{Petri} The ideal of a non-hyperelliptic
  canonical curve of genus $g\geq 4$ can be generated by polynomials
  of degree 2 and of degree 3.
\end{teo}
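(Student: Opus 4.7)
The plan is to follow the classical route of Enriques--Petri--Saint-Donat. The starting point is Max Noether's theorem, which asserts that for a non-hyperelliptic curve $C$ of genus $g \geq 3$ the canonical embedding $C \hookrightarrow \pp^{g-1}(\K)$ is projectively normal; equivalently, the multiplication maps $S^k H^0(C,K_C) \to H^0(C,kK_C)$ are surjective for every $k \geq 1$. Combined with Riemann--Roch this pins down the Hilbert function of the canonical image and hence $\dim \calI(C)_k$ in every degree. In particular one obtains $\dim \calI(C)_2 = \binom{g-2}{2}$, so there are abundantly many quadrics to work with.

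The heart of the argument is the Enriques--Petri dichotomy: if $C$ is neither trigonal (i.e.\ carries no $g^1_3$) nor isomorphic to a smooth plane quintic, then $\calI(C)$ is already generated by quadrics. I would prove this by showing that for every $k \geq 3$ the multiplication map $\calI(C)_2 \otimes \calP_{k-2} \to \calI(C)_k$ is surjective; this is equivalent to the vanishing of a suitable Koszul cohomology group and is traditionally established via the base-point-free pencil trick applied to a sufficiently general pencil on $C$, or for small $g$ by an explicit check exploiting the Clifford index.

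The remaining task is to treat the two exceptional cases. For a trigonal curve the quadrics through the canonical model cut out the rational normal scroll swept by the trisecant lines associated with the $g^1_3$, inside which $C$ sits as a divisor of type $(3,-)$; the required additional generators are the cubics coming from this divisorial description. For a smooth plane quintic (which forces $g=6$) the quadrics cut out the Veronese surface, and one adjoins the cubics determined by the plane quintic equation. A direct dimension count then shows that in both cases quadrics together with these cubics generate $\calI(C)$ in every degree, so degrees $2$ and $3$ always suffice.

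The genuine obstacle is the Enriques--Petri statement itself: a naive dimension count yields the expected number of quadrics, but proving surjectivity of the multiplication maps in higher degree requires a delicate syzygy argument, and the structural descriptions of the trigonal scroll and the Veronese surface must be extracted from the quadrics alone. I would follow the treatment in \cite{Saint-Donat-Petri}, using Koszul cohomology in the generic case and reducing the exceptional trigonal and plane-quintic cases to direct verifications.
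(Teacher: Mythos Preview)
Your sketch follows the classical Enriques--Petri--Saint-Donat route and is a reasonable outline of how one actually proves Petri's theorem. However, there is nothing in the paper to compare it against: the paper does not prove this statement. It is quoted as a classical result, attributed to Petri \cite{Petri} with a pointer to \cite{Saint-Donat-Petri} for a modern account, and is used only as a black box to bound the degrees of generators for canonical curves. So your proposal is not wrong, but it supplies a full proof where the paper gives none; for the purposes of this paper a one-line citation is all that is expected.
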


\begin{prop}\label{appl-Petri} Let $C\subset  \pp^n(\K)$ be a
  non-degenerate irreducible curve of geometric genus $g\geq 4$ and of degree
 $d=2g-2$, with $\K$ algebraically closed and $n=g-1$. Then the ideal
 $I=\mathcal I (C)$ can be generated by 
 polynomials of degree 2 and of degree 3.
\end{prop}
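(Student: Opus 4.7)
The plan is to reduce the statement to Petri's theorem (Theorem \ref{Petri}) by showing that, up to a linear change of coordinates in $\pp^{g-1}$, the curve $C$ is a smooth non-hyperelliptic canonical curve of genus $g$. The work breaks into three steps: (a) identify the hyperplane bundle pulled back to the normalization with the canonical bundle, (b) rule out the hyperelliptic case using the degree hypothesis, and (c) invoke Petri.

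First I would pass to the normalization $\nu\colon \widetilde C\to C$, which is a smooth irreducible projective curve of genus $g$, and consider the line bundle $L=\nu^{\ast}\mathcal{O}_{C}(1)$ on $\widetilde C$. Since $\nu$ is birational, $\deg L=\deg C=2g-2$. The $n+1=g$ homogeneous coordinates on $\pp^{g-1}$ restrict to sections of $\mathcal{O}_C(1)$ that are linearly independent (because $C$ is non-degenerate) and pull back to $g$ linearly independent sections of $L$; hence $h^0(\widetilde C,L)\geq g$.

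The crucial step is to prove $L\cong K_{\widetilde C}$. Riemann--Roch and Serre duality give
$$h^0(L)-h^0(K_{\widetilde C}-L)=\deg L - g + 1 = g-1,$$
so $h^0(K_{\widetilde C}-L)\geq 1$. But $\deg(K_{\widetilde C}-L)=0$, and a degree-zero line bundle admitting a non-zero section is trivial; therefore $L\cong K_{\widetilde C}$ and $h^0(L)=g$. The pulled-back coordinate sections are then a basis of $H^0(\widetilde C,K_{\widetilde C})$, so the composition $\widetilde C\to C\hookrightarrow \pp^{g-1}$ is the canonical map up to a linear change of coordinates on $\pp^{g-1}$.

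Finally, I would rule out hyperellipticity: if $\widetilde C$ were hyperelliptic, the canonical map would be $2$-to-$1$ onto a rational normal curve of degree $g-1$, which contradicts $\deg C = 2g-2$ when $g\geq 4$. Hence $\widetilde C$ is non-hyperelliptic, its canonical map is a closed embedding, and $\nu$ is an isomorphism. Thus $C$ is a smooth non-hyperelliptic canonical curve of genus $g\geq 4$, and Theorem \ref{Petri} implies that $\mathcal{I}(C)$ is generated by polynomials of degree $2$ and $3$; since linear changes of coordinates preserve degrees, the statement descends from the standard canonical embedding to the given one. The only real obstacle is the identification $L\cong K_{\widetilde C}$, which rests on the sharp numerical coincidence between $\deg L=2g-2$ and the lower bound $h^0(L)\geq g$ forced by non-degeneracy.
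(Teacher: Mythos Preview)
Your argument is correct and follows the same route as the paper: show that $C$ is a smooth non-hyperelliptic canonical curve of genus $g$ and then apply Petri's theorem. The only difference is that the paper delegates the first step to Theorem~VI.6.10 in Walker's \emph{Algebraic Curves}, while you supply a self-contained proof via Riemann--Roch on the normalization to identify the hyperplane bundle with the canonical bundle and then use the degree to exclude the hyperelliptic case.
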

\begin{proof} By Theorem VI.6.10 in \cite{Walker} any non-degenerate
 curve of genus $g$ and degree $2g-2$  embedded in $\pp^{g-1}(\K)$ is
 a smooth non-hyperelliptic canonical curve. Then the conclusion follows
 from  Theorem \ref{Petri}.
\end{proof}

The following result of Saint-Donat gives bounds for the
degrees of generators of curves of genus $g$ which are the image of an
embedding given by a complete linear series of sufficiently high
degree:

\begin{teo}\label{Saint-Donat-CR}(\cite{Saint-Donat-CR}) Let $C\subset
  \pp^n(\K)$ be an irreducible nonsingular curve of genus $g$
 embedded by a complete linear series
  of degree $d$. 
\begin{enumerate}
\item If $d \geq 2g+1$, then the ideal $I$ of $C$ can be generated
  by polynomials of degree 2 and of degree 3.
\item If $d \geq 2g+2$, then
  $I$ can be generated by polynomials of degree 2.
\end{enumerate}
\end{teo}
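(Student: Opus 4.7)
The plan is to follow the classical approach through projective normality and the base-point-free pencil trick, as developed by Mumford and Saint-Donat. Write $L = \mathcal{O}_C(1)$; by hypothesis $L$ is very ample of degree $d$ and $C$ is embedded by the complete linear series $|L|$, so $H^0(C,L) \cong H^0(\pp^n(\K),\mathcal O(1))$. In particular $I_1 = 0$ and the task is to analyze $I_k$ for $k\geq 2$.

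First I would establish that the embedding is projectively normal. Since $\deg L^k = kd \geq k(2g+1) > 2g-2$ for every $k \geq 1$, Serre duality yields $H^1(C,L^k)=0$. A standard induction on $k$, using the exact sequence obtained by restricting to a general hyperplane section together with Riemann-Roch, then shows that the multiplication maps $\mu_k \colon \operatorname{Sym}^k H^0(L) \to H^0(L^k)$ are surjective. Hence $\calP/I \cong \bigoplus_{k\geq 0} H^0(L^k)$ and $I_k = \ker \mu_k$, so the problem reduces to controlling the kernels $I_k$.

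Next I would apply the base-point-free pencil trick. Choose a two-dimensional base-point-free subspace $V \subset H^0(L)$; the associated evaluation gives a short exact sequence $0 \to L^{-1} \to V \otimes \mathcal O_C \to L \to 0$ on $C$. Tensoring with $L^{k-1}$ and passing to cohomology, the vanishing $H^1(L^{k-2})=0$ forces the map $V \otimes H^0(L^{k-1}) \to H^0(L^k)$ to be surjective with kernel identified with $H^0(L^{k-2})$. Combining this information with the multiplication $H^0(L) \otimes H^0(L^{k-1}) \to H^0(L^k)$ via a Koszul-type diagram chase, one obtains the inductive statement that $H^0(L)\cdot I_{k-1} = I_k$ as soon as both $H^1(L^{k-3})=0$ and $H^1(L^{k-2})=0$ hold.

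Combining these facts yields both parts. When $d \geq 2g+2$, one has $(k-3)d > 2g-2$ for every $k \geq 3$, so $H^0(L)\cdot I_{k-1} = I_k$ for all $k \geq 3$, and $I$ is generated by $I_2$, proving (2). When $d \geq 2g+1$, the inductive step works only for $k \geq 4$, so $I$ is generated by $I_2 \cup I_3$, proving (1). The hard part will be making the pencil-trick step fully rigorous, i.e. showing that every element of $I_k$ can actually be written as a product of an element of $H^0(L)$ with an element of $I_{k-1}$: this requires carefully tracing how sections of $L^{k-2}$ inside the kernel of $V \otimes H^0(L^{k-1}) \to H^0(L^k)$ correspond to syzygies that already live in $I_2$ (respectively $I_2 + I_3$), a standard but delicate diagram chase in the Koszul complex of $V$ whose success depends precisely on the cohomological vanishings produced by the degree hypothesis on $d$.
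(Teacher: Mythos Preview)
The paper does not prove this theorem at all; it is simply quoted from \cite{Saint-Donat-CR} as a classical input used later to bound generator degrees. So there is no ``paper's own proof'' to compare against.

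That said, your sketch follows the standard route (projective normality first, then an inductive step $I_k = H^0(L)\cdot I_{k-1}$ via a pencil/Koszul argument), which is indeed how Saint-Donat and Mumford argue. But there is a real slip in your numerics. You claim the inductive step works whenever $H^1(L^{k-3})=H^1(L^{k-2})=0$, and then assert that for $d\geq 2g+2$ one has $(k-3)d>2g-2$ for every $k\geq 3$. At $k=3$ this reads $0>2g-2$, which is false for every $g\geq 1$; equivalently $H^1(L^{0})=H^1(\mathcal O_C)\cong\K^g\neq 0$. So your stated hypotheses do not cover the crucial base case $I_3=H^0(L)\cdot I_2$ in part~(2), and that base case is precisely the heart of the theorem.

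The actual argument does not reduce to vanishing of $H^1(L^{k-3})$. One works not with a two-dimensional pencil $V\subset H^0(L)$ but with the full kernel bundle $M_L=\ker\bigl(H^0(L)\otimes\mathcal O_C\to L\bigr)$, and what is needed is $H^1(M_L\otimes L^{q})=0$ for $q\geq 1$. This is obtained by filtering $M_L$ using a general choice of points on $C$, so that the graded pieces are line bundles of degree $1-d$; the condition $d\geq 2g+2$ then gives each piece degree $\leq -(2g+1)$, whence the twist by $L$ has degree $\geq 1$ more than $2g-2$ and the required $H^1$-vanishing follows. Your pencil version only sees a rank-one sub of $M_L$ and loses exactly this extra margin, which is why your bookkeeping breaks at $k=3$.
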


The following result shows that we can remove the condition of being
embedded by a complete linear series:

\begin{prop}\label{appl-Saint-Donat-CR} Let $C\subset  \pp^n(\K)$ be
  a non-degenerate irreducible curve of geometric genus $g$ and degree $d$, with $\K$
 algebraically closed and $n=d-g$. 
Then
\begin{enumerate}
\item If $d \geq 2g+1$, then the ideal $I=\mathcal I (C)$  can be generated
  by polynomials of degree 2 and of degree 3.
\item If $d \geq 2g+2$, then
  $I=\mathcal I (C)$ can be generated by polynomials of degree 2.
\end{enumerate}
\end{prop}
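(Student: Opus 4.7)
The plan is to reduce Proposition \ref{appl-Saint-Donat-CR} to Theorem \ref{Saint-Donat-CR} by showing that under the given hypotheses, $C$ is automatically smooth and embedded by a complete linear series of degree $d$, so the stronger hypotheses of Saint-Donat's theorem are satisfied for free.

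First I would introduce the normalization $\pi\colon \widetilde C \to C$, so that $\widetilde C$ is a smooth projective curve of genus $g$ (by definition of the geometric genus). Composing with the inclusion $C\hookrightarrow \pp^n(\K)$ yields a morphism $\phi\colon \widetilde C \to \pp^n(\K)$ with image $C$. Set $L=\phi^*\mathcal O_{\pp^n}(1)$; then $\deg L=\deg C=d$, and the pulled-back homogeneous coordinates span a subspace $V\subseteq H^0(\widetilde C, L)$ of dimension exactly $n+1$, because the non-degeneracy of $C$ forces the coordinate functions to be linearly independent on $C$ (and hence on $\widetilde C$, since $\pi$ is surjective).

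Next I would invoke Riemann--Roch. Since $d\ge 2g+1>2g-2$, the line bundle $L$ is non-special, i.e. $h^1(\widetilde C, L)=0$, so $h^0(\widetilde C, L)=d-g+1=n+1$. Therefore $V=H^0(\widetilde C, L)$, meaning $\phi$ is given by the complete linear series $|L|$. Moreover, the classical fact that every line bundle of degree $\ge 2g+1$ on a smooth curve is very ample (proved by applying Riemann--Roch to $L(-p-q)$ and $L(-2p)$ to verify separation of points and tangent vectors) implies that $\phi$ is a closed immersion. In particular $C$ is smooth and isomorphic to $\widetilde C$, and it is embedded in $\pp^n(\K)$ by the complete linear series $|L|$ of degree $d$. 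Applying Theorem \ref{Saint-Donat-CR} to this embedding gives both conclusions (1) and (2).

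No step is a serious obstacle; the content of the argument is precisely the observation that the numerical equality $n=d-g$, combined with the bound $d\ge 2g+1$, automatically upgrades ``non-degenerate irreducible curve'' into ``smooth curve embedded by a complete linear series,'' so the more restrictive hypotheses of Saint-Donat's theorem are not actually restrictive under our assumptions.
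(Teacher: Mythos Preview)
Your proof is correct and follows essentially the same route as the paper: pass to the normalization $\widetilde C$, use Riemann--Roch together with $d\ge 2g+1$ to see that the linear series has dimension $d-g+1=n+1$ and is therefore complete (by non-degeneracy and the hypothesis $n=d-g$), invoke very ampleness in degree $\ge 2g+1$ to conclude that $\phi$ is an embedding and hence $C$ is smooth, and then apply Saint-Donat's Theorem~\ref{Saint-Donat-CR}. The only difference is cosmetic: you phrase things in terms of line bundles and $h^0$, while the paper uses divisors and $l(D)$.
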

\begin{proof} Recall that any curve $C$ of degree $d$ and genus $g$ in
  $\pp^n(\K)$ can be seen as the image of the map given by a linear
series contained in $\mathcal L(D)$ for some divisor $D$ of degree $d$
on a non-singular model $\widetilde C$ of $C$.

For any divisor $D$ on $\widetilde C$, denote by $l(D)$ the dimension
of the complete linear series $\mathcal L(D)$. We also denote by $K$ a
canonical divisor.

If $d \geq 2g+1$, then $l(K - D)=0$ and hence by Riemann-Roch Theorem
$l(D)=d-g+1$; moreover (see \cite{Fulton}) the map induced by $D$ is
an embedding. Thus we can see $C$ as the image of an embedding of
$\widetilde C$ in $\pp^{d-g}(\K)$.  So, since $C$ is non-degenerate,
$C$ is embedded by a complete linear series if and only if $n=d-g$.

Moreover, if $d \geq 2g+1$ and $n=d-g$, the curve $C$ is
the image of a non-singular curve through a map which is an embedding,
therefore $C$ is non-singular. Hence the hypotheses of Theorem
\ref{Saint-Donat-CR} are fulfilled and it implies our result. 
\end{proof}

{\bf Example 1.} Assume that $C$ has genus $g\geq 3$ and is embedded
by the bicanonical map. Since $\deg(2K)=4g-4\geq 2g+2$, by Theorem
\ref{Saint-Donat-CR} $I$ can be generated by quadratic
polynomials. 

{\bf Example 2.} If $C$ has genus $g=2$ and is embedded by the
tricanonical map (which gives an embedding because $\deg(3K)=6g-6 =6
\geq 2g+1$), since $\deg(3K)\geq 2g+2$ again by Theorem
\ref{Saint-Donat-CR} $I$ can be generated by polynomials of degree 2.
\smallskip

Other results giving bounds for the degrees of generators of the curve
ideal in different situations are available in the literature; the
following one (see \cite{Akahori}) gives results in the hyperelliptic case:

\begin{prop}[Akahori]\label{Akahori}  Let $C\subset  \pp^n(\K)$ be
  an irreducible non-degenerate and non-singular hyperelliptic curve
  of genus $g$ and degree $d$.
\begin{enumerate}
\item If  $d=2g$, then the ideal $I$ of $C$ can be generated
  by polynomials of degrees 2, 3 and 4.
\item If  $d=2g-1$, then  $I$ can be generated
  by polynomials of degrees 2, 3, 4  and 5.
\end{enumerate}
\end{prop}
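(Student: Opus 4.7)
The plan is to reduce the statement to Akahori's main theorem in \cite{Akahori} by first arranging its embedding hypotheses via the Riemann--Roch machinery already used in Proposition \ref{appl-Saint-Donat-CR}.

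First I would establish that, under our numerical assumptions, $C$ is automatically the image of a complete linear series embedding. Any curve of degree $d$ and genus $g$ in $\pp^n(\K)$ is the image of its non-singular model $\widetilde C$ under the map associated to a linear series contained in some complete series $\mathcal L(D)$ with $\deg D=d$. When $d\geq 2g-1$, Serre duality gives $l(K-D)=0$ and Riemann--Roch yields $l(D)=d-g+1$, while the associated map is an embedding. Hence a non-degenerate curve of degree $2g$ in $\pp^g(\K)$, or of degree $2g-1$ in $\pp^{g-1}(\K)$, is automatically embedded by a complete linear series and in particular is non-singular. This matches exactly the hypotheses in \cite{Akahori}.

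With the reduction in place, I would quote Akahori's theorem as the core input. The strategy of that proof is to realise the hyperelliptic curve $C$ inside the rational normal scroll $S$ swept out by the lines joining pairs of points conjugate under the hyperelliptic involution, to write down the quadric generators of $\calI(S)$, and then to describe $\calI(C)/\calI(S)$ in terms of line bundles on the scroll. The Koszul cohomology of these bundles pinpoints the degrees in which extra generators of $\calI(C)$ appear: the case $d=2g$ only requires corrections up to degree $4$, while the slightly deficient embedding at $d=2g-1$ produces one additional contribution in degree $5$.

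The main obstacle in executing the plan from scratch is the cohomological bookkeeping on the scroll: one has to verify that the relevant Koszul groups $K_{p,q}(C,D)$ vanish outside the claimed degree range, and this vanishing depends on the splitting type of the rank-two bundle whose projectivisation is $S$, which in turn is governed by the Clifford-type behaviour of hyperelliptic divisors of degree just below $2g+1$. Once this vanishing is secured in the two numerical regimes $d=2g$ and $d=2g-1$, generation of $I$ in degrees $\{2,3,4\}$ and $\{2,3,4,5\}$ respectively follows at once.
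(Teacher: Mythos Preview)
The paper does not supply a proof of this proposition at all: it is stated as a direct citation of Akahori's result (announced with ``the following one (see \cite{Akahori}) gives results in the hyperelliptic case'') and left without argument. So there is no ``paper's own proof'' to compare against; what you have written is already more than the authors offer.

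That said, your reduction step has two gaps worth flagging. First, you silently impose $n=d-g$ (i.e.\ $n=g$ when $d=2g$ and $n=g-1$ when $d=2g-1$) in order to force the linear series to be complete, but the proposition as stated places no constraint on $n$; a non-degenerate non-singular curve of degree $d$ could a priori sit in a smaller projective space via an incomplete series, and then Akahori's hypotheses would not be met by your argument. Second, your assertion that for $\deg D\geq 2g-1$ ``the associated map is an embedding'' is false in general: very ampleness is automatic only from $\deg D\geq 2g+1$ onward, and on a hyperelliptic curve a divisor of degree $2g$ of the form $K+P+Q$ is never very ample (the map contracts the $g^1_2$-conjugate pair). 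In the situation at hand you do not actually need this claim, since $C$ is already assumed non-singular and embedded; what you need is only that the given series is complete, and that is exactly the dimension count $n+1=l(D)=d-g+1$ which requires the missing hypothesis on $n$.

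Your sketch of Akahori's own argument (scroll containing the hyperelliptic curve, quadrics cutting out the scroll, Koszul-type control on the residual generators) is a reasonable summary of the strategy, but since the paper itself simply cites the result, the appropriate ``proof'' here is just the reference.
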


When we need to compute the null-space of the evaluation matrix 
$M_{\mathcal R}$ for approximate points (see Section \ref{numeric}),
we can try to determine its rank by inspecting its singular value
spectrum. Since this is not guaranteed to work, it is useful to
predict what the rank should be. Since the rank of $M_{\mathcal R}$
equals $|\calN_k|= \dim \mathcal P_k - \dim I_k$, the following
proposition gives a situation where we can predict this value.

Recall that if  $g_0, \ldots, g_n$ is a basis of
the complete linear series
$\mathcal L(D)$ of dimension $l(D)=n+1$, then  $I_k= \Ker \varphi_k$
where $\varphi_k \colon \mathcal P_k \to \mathcal L(kD)$ defined by 
 $\varphi_k (x_i)=g_i$.

\begin{prop}\label{dimIs} If $\varphi_k$ is surjective and 
$k\cdot \deg(D) \geq 2g-1$,
  then $$ \dim \mathcal P_k -\dim I_k = k\cdot\deg(D)-g+1.$$
\end{prop}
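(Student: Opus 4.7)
The plan is to apply the rank-nullity theorem to $\varphi_k$ and then use the Riemann-Roch theorem to compute the dimension of the image. By definition $I_k = \Ker \varphi_k$, so
$$\dim \calP_k - \dim I_k = \dim \mathrm{Im}(\varphi_k).$$
Under the hypothesis that $\varphi_k$ is surjective, $\dim \mathrm{Im}(\varphi_k) = \dim \mathcal{L}(kD) = l(kD)$, so the task reduces to showing that $l(kD) = k\cdot \deg(D) - g + 1$.

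The Riemann-Roch theorem states that for any divisor $E$ on the non-singular model $\widetilde C$,
$$l(E) - l(K - E) = \deg(E) - g + 1,$$
where $K$ is a canonical divisor. Applying this with $E = kD$ gives
$$l(kD) = k\cdot \deg(D) - g + 1 + l(K - kD).$$
To finish I would use the hypothesis $k\cdot \deg(D) \geq 2g - 1$ to kill off the correction term. Indeed $\deg(K - kD) = 2g - 2 - k\cdot \deg(D) \leq -1 < 0$, and since effective divisors have non-negative degree, this forces $l(K - kD) = 0$. Substituting back yields $l(kD) = k\cdot \deg(D) - g + 1$ and hence the claimed equality.

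No real obstacle arises here, since both the rank-nullity argument and the vanishing of $l(K - kD)$ when $\deg(kD) > 2g - 2$ are standard. The only point worth stressing in the write-up is that surjectivity of $\varphi_k$ is used precisely to identify $\dim \mathrm{Im}(\varphi_k)$ with $l(kD)$, so that Riemann-Roch can be invoked on $\widetilde C$ rather than directly on the possibly singular image curve $C$.
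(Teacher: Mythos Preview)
Your proof is correct and follows essentially the same approach as the paper: rank-nullity for $\varphi_k$ together with Riemann--Roch and the vanishing of the speciality term when $\deg(kD)\geq 2g-1$. The paper writes this vanishing as $i(kD)=0$, which is just the notation $i(kD)=l(K-kD)$, so your degree argument for $l(K-kD)=0$ is exactly the same content made explicit.
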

\begin{proof} If $\varphi_k$ is surjective, then $\dim I_k = \dim \Ker
  \varphi_k=\dim   \mathcal P_k - \dim\mathcal L(kD)$. \\
Since $\deg(kD)=k\cdot\deg(D)\geq 2g-1$, then $i(kD)=0$. Hence by 
 Riemann-Roch Theorem we get $l(kD)= \deg(kD) -g+1=k \cdot\deg(D)
 -g+1$ and the conclusion follows. 
\end{proof}

{\bf Examples.} 1. Assume that $C$ is non-hyperelliptic of genus
$g\geq 4$ and take the canonical divisor $D=K$; in particular
$\deg(D)=2g-2$ and $l(D)=g$ (i.e. in this case $n=g-1$). By Theorem
\ref{Petri} we already know that $I$ can be generated by polynomials
of degree 2 and of degree 3, and so it suffices to know $\dim I_2$ and
$\dim I_3$.
 
Then by Noether's Theorem the map $\varphi_k \colon \mathcal P_k
\to \mathcal L(kD)$ is surjective for all $k$. Furthermore for all
$k\geq 2$ we have that $k\cdot\deg(D)\geq 2g-1$, hence by Proposition
\ref{dimIs} we have 
$\dim \mathcal P_k -\dim I_k =k(2g-2)-g+1.$
In particular 
$$|\calN_2|=\dim \mathcal P_2 - \dim I_2= 2(2g-2)-g+1=3g-3$$
$$|\calN_3|=\dim \mathcal P_3- \dim I_3=3(2g-2)-g+1= 5g-5.$$

\smallskip

2. If we choose a divisor $D$ such that $\deg(D)\geq 2g+1$, then
   $\varphi_k \colon \mathcal P_k \to \mathcal L(kD)$ is surjective
   (see \cite{Mumford}); moreover for all $k\geq 1$ we have that
   $k\cdot \deg(D) \geq 2g-1$. Then we can compute $\dim I_k$ by
   Proposition \ref{dimIs}.

In particular if $D=2K$ and $g\geq 3$, then we know that  $I$ can
be generated by quadratic polynomials. Since $i(2K)=0$, by
Riemann-Roch Theorem $l(2K)=3g-3$, i.e. $n=3g-4$. Then 
$$|\calN_2|=\dim \mathcal P_2 - \dim I_2=2(4g-4)-g+1=7g-7.$$

If $g=2$ and we choose $D=3K$, we know that $I$ can be generated by
quadratic polynomials. Since $i(3K)=0$, by Riemann-Roch Theorem
$l(3K)=5g-5=5$, i.e. $n=4$.  Then 
$$|\calN_2|=\dim \mathcal P_2 - \dim I_2=2 \deg(3K)-g+1=11.$$

\begin{rem} When the ideal can be generated in degree $2$ and we know 
$|\calN_2|$ as above, then the algorithm to compute a minimal set of generators can be considerably simplified. If $M_{\mathcal R}$ is the point evaluation matrix for all monomials of degree $2$, since we know that $\rk M_{\mathcal R}=|\calN_2|$, a single application of the SVD-algorithm with this imposed rank computes a basis for the null-space of $M_{\mathcal R}$ which directly yields a minimal set of generators of the ideal.
\end{rem}

\section*{Acknowledgments}
We wish to thank Mika Sepp\"al\"a for proposing this problem to us and a referee for many
helpful comments.

\bibliographystyle{amsalpha}

\end{document}